\newtheorem{theorem}{Theorem}[section]
\newtheorem{lemma}[theorem]{Lemma}
\newtheorem{proposition}[theorem]{Proposition}
\newtheorem{corollary}[theorem]{Corollary}
\theoremstyle{definition}
\newtheorem{definition}[theorem]{Definition}
\newtheorem{example}[theorem]{Example}
\theoremstyle{remark}
\newtheorem{remark}[theorem]{Remark}
\numberwithin{equation}{section}
\newcommand{\A}{\mathfrak{A}}
\newcommand{\E}{\mathscr{E}}
\newcommand{\F}{\mathscr{F}}
\newcommand{\HH}{\mathscr{H}}
\newcommand{\KK}{\mathscr{K}}
\newcommand{\CL}{\mathcal{L}}
\newcommand{\CC}{\mathbb{C}}
\begin{document}
\title[Vector-valued reproducing kernel Hilbert $C^*$-modules]{Vector-valued reproducing kernel Hilbert $C^*$-modules}
\author[M.S. Moslehian ]{Mohammad Sal Moslehian}
\address{Department of Pure Mathematics, Center of Excellence in Analysis on Algebraic Structures (CEAAS), Ferdowsi University of Mashhad, P. O. Box 1159, Mashhad 91775, Iran.}
\email{moslehian@um.ac.ir; moslehian@yahoo.com}

\renewcommand{\subjclassname}{\textup{2020} Mathematics Subject Classification}
\subjclass[]{46E22; 46L08; 47A56; 46L05.}

\keywords{Conditionally negative definite kernel; reproducing kernel Hilbert module; Hilbert $C^*$-module; Kolmogorov decomposition.}

\begin{abstract}
The aim of this paper is to present a unified framework in the setting of Hilbert $C^*$-modules for the scalar- and vector-valued reproducing kernel Hilbert spaces and $C^*$-valued reproducing kernel spaces. We investigate conditionally negative definite kernels with values in the $C^*$-algebra of adjointable operators acting on a Hilbert $C^*$-module. In addition, we show that there exists a two-sided connection between positive definite kernels and reproducing kernel Hilbert $C^*$-modules. Furthermore, we explore some conditions under which a function is in the reproducing kernel module and present an interpolation theorem. Moreover, we study some basic properties of the so-called relative reproducing kernel Hilbert $C^*$-modules and give a characterization of dual modules. Among other things, we prove that every conditionally negative definite kernel gives us a reproducing kernel Hilbert $C^*$-module and a certain map. Several examples illustrate our investigation.
\end{abstract} \maketitle

\section{Introduction}

The notion of Hilbert $C^{*}$-module is an extension of both those of Hilbert space and $C^*$-algebra. A Hilbert $C^*$-module over a $C^*$-algebra $\mathfrak{A}$ is a right $ \mathfrak{A}$-module $\mathscr E$ equipped with an $ \mathfrak{A}$-valued inner product $\langle \cdot, \cdot \rangle: \mathscr{H} \times \mathscr{H} \to \mathfrak{A}$ satisfying (i) $\langle x,x\rangle \geq 0$ with equality if and only if $x=0$, where $\geq$ denotes the usual partial order on the real space of all self-adjoint elements of $\A$; (ii) it is linear at the second variable; (iii) $\langle x,ya\rangle=\langle x,y\rangle a$; (iv) $\langle y,x\rangle=\langle x,y\rangle^*$ for all $x, y \in \E, a\in \A$, and $\lambda\in \mathbb{C}$, as well as, $\mathscr E$ endowed with the induced norm $\| x\| = \| \langle x,x\rangle \|^{\frac{1}{2}}\,\,(x\in \mathscr{H})$ is a complete space. When all conditions above except $\langle x,x\rangle=0 \Longrightarrow x=0$ and the completeness hold, we call $\E$ a semi-inner product $C^*$-module, which enjoys the Cauchy--Schwarz inequality
\begin{equation}\label{c-s}
\langle x,y\rangle \langle y,x\rangle \leq \|y\|^2\,\langle
x,x\rangle\qquad(x,y\in \E).
\end{equation}
In addition, $|x|=\langle x,x\rangle^{\frac{1}{2}}$ gives an $\A$-valued norm. For example, every $C^*$-algebra $\A$ is a Hilbert $C^*$-module over itself via $\langle a,b\rangle=a^*b,\,\, a,b\in\A$. By an $\A$-linear map between two Hilbert $C^*$-modules $\E$ and $\F$, we mean a linear map $f: \E\to \F$ satisfying the module property $f(xa)=f(x)a,\,\,a\in\A, x\in E$. A Hilbert $C^*$-module $\E$ is said to be \emph{self-dual} whenever for each bounded $\A$-linear map $f: \E \to \A$, there exists some $x_0\in\E$ such that $f(x)=\hat{x_0}(x):=\langle x_0,x\rangle$ for all $x\in \E$, in other words, the isometric $\A$-linear map $x\mapsto \hat{x}$ from $\E$ into the space $\E'$ consisting of all $\A$-module maps is surjective; see \cite[\S 2.5]{MT}. Note that $\E'$ is a right $\A$-module under $(\lambda f+g)(x)=\overline{\lambda}f(x)+g(x)$ and $(f\cdot a)(x)=a^*f(x)$. Both finitely generated Hilbert $C^*$-modules and Hilbert $C^*$-modules over finite-dimensional $C^*$-algebras are self-dual; see \cite{FRA}.

Hilbert $C^{*}$-modules behave in a similar manner as Hilbert spaces, nevertheless some basic Hilbert space properties such as the Riesz representation theorem, the adjointability of operators between Hilbert $C^*$-modules, and decomposition of the module into an orthogonal sum do not hold, in general. Recall that a closed submodule $\F$ of a Hilbert $C^*$-module is called \emph{orthogonally complemented} when $\E=\F\oplus \F^\perp$ in which $\F^\perp=\{x\in \E: \langle x,y\rangle=0 {\rm ~for~all~} y\in\F\}$.

Throughout the paper, all linear spaces are considered over the complex field $\mathbb{C}$, $\mathfrak{A}$ denotes a $C^*$-algebra, and capital letters in Euler fonts such as $\mathscr{E}$ stand for Hilbert $C^*$-modules over $\mathfrak{A}$ or, shortly, Hilbert $\A$-modules. Let $\mathcal{L}(\mathscr{E},\mathscr{F})$ denote the space of all operators $A:\mathscr{E}\to \mathscr{F}$ for which there exists an operator $A^*:\mathscr{F}\to \mathscr{E}$ satisfying $\langle Ax,y\rangle=\langle x,A^*y\rangle$ for any $x\in \mathscr{E}$ and $y\in \mathscr{F}$; such operators are called \emph{adjointable}. We write $\mathcal{L}(\mathscr{E}):=\mathcal{L}(\mathscr{E},\mathscr{E})$ that is a $C^*$-algebra endowed with the operator norm. An operator $A$ in the $C^*$-algebra $\mathcal{L}(\mathscr{E})$ is positive (see \cite[Lemma~4.1]{LAN}) if and only if 
\begin{align}\label{pos}
\langle Ax,x\rangle\geq0 \qquad{\rm ~for~all~} x\in \E.
\end{align} The reader is referred to \cite{LAN, MT} for detailed information about Hilbert $C^*$-modules.

The study of positive definite kernels goes back to the works of Hilbert around 1904, and conditionally positive definite kernels were first given by Schoenberg in 1940s. Aronszajn \cite{Aron} systematically developed the concept of reproducing kernel Hilbert space, which was introduced by Stanis\l aw Zaremba in 1907. Roughly speaking, these are spaces of functions in which when two functions are close in the norm, then they are pointwise close. There exists a one-to-one correspondence between positive definite kernels and reproducing kernel Hilbert spaces. The (Kolmogorov) decomposition of positive definite kernels was explored by Kolmogorov in the framework of Hilbert spaces. Murphy \cite{MUR} introduced the Kolmogorov decomposition of positive definite kernels in the framework of Hilbert $C^*$-modules. This decomposition was explored by the author \cite{MOS1} for conditionally positive definite kernels. Reproducing kernel Hilbert spaces, positive definite kernels, and their Kolmogorov decompositions, especially in the context of holomorphic functions (see \cite{CAR, KUM}), have been extended to other settings \cite{MUR, BAA, SZ2} and have appeared in many applications in several disciplines such as machine learning, integral equations, complex analysis, and probability; see \cite{BBLS, BER, An} and references therein. Another related concept is that of relative reproducing kernel Hilbert space. In \cite{ALP}, the authors investigated such spaces and associate with any map satisfying some conditions, a reproducing kernel Hilbert space over the field of real numbers whose reproducing kernel is the given map.

This paper is organized as follows. In Section 2, we investigate conditionally negative definite kernels with values in the $C^*$-algebra $\CL(\E)$, provided some examples, and state their one-to-one correspondence with positive definite kernels. In Section 3, we first show that there is a two-sided connection between positive definite kernels and reproducing kernel Hilbert $C^*$-modules. Furthermore, we explore some conditions under which a function is in the reproducing kernel module and present an interpolation theorem. Section 4 starts with a review of some basic properties of the so-called relative reproducing kernel Hilbert $C^*$-modules. We then provide a characterization of such modules. Every reproducing kernel Hilbert $C^*$-module is evidently a relative reproducing kernel Hilbert $C^*$-module while the reverse is not true, in general. Nevertheless, we prove that the reverse statement holds under some mild conditions. Finally, we show that every conditionally negative definite kernel gives us a reproducing kernel Hilbert $C^*$-module and a certain map. 

The paper enriches the theory of conditionally negative definite kernels and relative reproducing kernel Hilbert spaces in two directions while we model some ideas from \cite{ALP, MOS1} to our setting: One direction is toward a general framework including all other known settings such as scalar-, vector-, and $C^*$-valued kernels. The other one shows that conditionally negative definite kernels give reproducing kernel Hilbert spaces, in general, and only in some special cases, they give relative reproducing kernel Hilbert spaces.


\section{Positive definite and conditionally negative definite kernels}

We mimic some techniques of the scalar theory of conditionally negative definite kernels. This extension covers all scalar, operator,  and $C^*$-valued versions of conditionally negative definite kernels. Let us begin our investigation with the following definition.

\begin{definition}
A matrix $A=[a_{ij}]$ in the set $\mathbb{M}_n(\A)$ of $n \times n$ matrices with entries in $\A$ is called \emph{positive definite} if it is positive in the $C^*$-algebra $\mathbb{M}_n(\A)$ or, equivalently, if and only if $\sum_{i,j=1}^na_i^*a_{ij}a_j \geq0$ for all $a_1, \ldots, a_n \in\A$. 

A hermitian matrix $A=[a_{ij}]\in \mathbb{M}_n(\A)\,\, (n\geq 2)$ is called \emph{conditionally negative definite} if $\sum_{i,j=1}^na_i^*a_{ij}a_j \leq0$ for all $a_1, \ldots, a_n \in\A$ with $\sum_{i=1}^na_i=0$. 
\end{definition}
\begin{example}\label{ex1}
\begin{itemize}
\item[(i)] Any sum of matrices of the form $[c_i^*c_j]$ for $c_1, \ldots, c_n \in \A$ is positive definite and vice versa; see \cite[Lemma IV.3.2]{TAK}.
\item[(ii)] If $x_1, \ldots, x_n$ are in a semi-inner product $C^*$-module, then $[\langle x_i,x_j\rangle]$ is positive definite since, by the previous example, we have
\begin{align*}
\sum_{i,j=1}^n a_i^*\langle x_i,x_j\rangle a_j=\left\langle \sum_{i=1}^nx_ia_i, \sum_{i=1}^nx_ia_i\right\rangle\geq 0,\quad(a_1,\ldots,a_n\in\A).
\end{align*}
\item[(iii)] A matrix of the form $[c_i+c_j^*]$ $c_1, \ldots, c_n \in \A$ is clearly conditionally negative definite.
\item[(iv)] Any matrix of the form $[|x_i-x_j|^2]$ is conditionally negative definite for all $x_1,\ldots, x_n$ in a Hilbert $C^*$-module $\E$ subject to the condition that all $\langle x_i,x_j\rangle,\,\, 1\leq i,j\leq n$ are self-adjoint in $\A$. Note that
\begin{align*}
|x_i-x_j|^2=\langle x_i-x_j,x_i-x_j\rangle=|x_i|^2+|x_j|^2-2\langle x_i,x_j\rangle.
\end{align*}
Therefore,
\begin{align*}
\sum_{1\leq i,j\leq n}&a_i^*|x_i-x_j|^2a_j\\&=\sum_{i=1}^na_i^*|x_i|^2\sum_{j=1}^na_j+\sum_{i=1}^na_i^*\sum_{j=1}^n|x_j|^2a_j-2\left\langle \sum_{i=1}^nx_ia_i,\sum_{j=1}^nx_ja_j\right\rangle\\&=-2\left\langle \sum_{i=1}^nx_ia_i,\sum_{j=1}^nx_ja_j\right\rangle\leq 0.
\end{align*}

\end{itemize}
\end{example}

The concept of conditionally negative definite kernel has been used in significant fields including harmonic analysis and probability theory; see \cite{BER}.

A \emph{kernel} is a map $K: S\times S \to \mathcal{L}(\E)$ in which $S$ is a nonempty set. A kernel $K$ is called \emph{hermitian} if $K(t,s)=K(s,t)^*$ for all $s,t \in S$, and it said to be \emph{symmetric} if $K(t,s)=K(s,t)$ for all $s,t \in S$.
\begin{definition}
If a (hermitian) map $L:S \times S \to \mathcal{L}(\E)$ satisfies
\begin{eqnarray}\label{pdk1}
\sum_{i, j=1}^n \langle L(s_i,s_j)x_i, x_j\rangle \geq 0,
\end{eqnarray}
for all $n\geq 2$, all $s_1, \ldots, s_n\in S$, and all $x_1, \ldots, x_n\in\E$, then it said to be \emph{positive definite}. 
\end{definition}
\begin{example}\label{ex2}
\begin{itemize}
\item[(i)]Given a function $f:S\to \E$, the kernel $K:S\times S\to \CL(\E)$ defined by $K(s,t)=f(t)\otimes f(s)$, where $(x\otimes y)(z):=x\langle y,z\rangle,\,\, x,y,z\in \E$, is positive definite, since
\begin{align*}
\sum_{i,j=1}^n\langle K(s_i,s_j)x_i,x_j\rangle&=\sum_{i,j=1}^n \big\langle f(s_j)\langle f(s_i), x_i\rangle,x_j\big\rangle\\
&=\sum_{i,j=1}^n \langle f(s_i),x_i\rangle^* \langle f(s_j),x_j\rangle\\
&=\left(\sum_{i=1}^n \langle f(s_i),x_i\rangle\right)^*\left(\sum_{j=1}^n \langle f(s_j),x_j\rangle\right)\geq 0.
\end{align*}
\item[(ii)] Suppose that $\Delta$ is a compact Hausdorff space equipped with a Radon measure $\mu$ and $\A$ is a $C^*$-algebra of operators that act on a Hilbert space. By  \emph{continuous field of operators} we mean a family $(a_t)_{t\in \Delta}$ of operators in ${\mathscr A}$ such that the function $t \mapsto a_t$ is continuous on $\Delta$ in the norm topology and $t \mapsto \|a_t\|$ is integrable. Then the Bochner integral
$\int_{\Delta}a_t{\rm d}\mu(t)$ is the unique element in
$\A$ satisfying
$\varphi\left(\int_\Delta a_t{\rm d}\mu(t)\right)=\int_\Delta \varphi(a_t){\rm d}\mu(t)$
for all linear functionals $\varphi$ in the norm dual of $\A$; see \cite{MB} for the detailed construction of the Bochner integral. Let $\kappa$ be a scalar-valued positive definite kernel on $\Omega$. Then 
\[K(s,t)=\int_\Delta\int_\Delta \kappa(s,t)a_s^*a_t{\rm d}\mu(s){\rm d}\mu(t)\]
is a positive definite kernel due to the fact that the product of two positive definite kernels $\kappa(s,t)$ and $[a_s^*a_t]$ is positive definite.
\end{itemize} 
\end{example}
A related concept reads as follows.
\begin{definition}
A hermitian kernel $K:S \times S \to \mathcal{L}(\E)$ is called \emph{conditionally negative definite} whenever
\begin{eqnarray}\label{pdk2}
\sum_{i, j=1}^n \langle K(s_i,s_j)x_i,x_j\rangle \leq 0,
\end{eqnarray}
for all $n\geq 2$, all $s_1, \ldots, s_n\in S$, and all $x_1, \ldots, x_n\in\E$ with $\sum_{i=1}^nx_i=0$.
It said to be normalized if $K(s,s) = 0$ for all $x \in S$.
\end{definition}
Passing to the unitization of $\A$ if necessary, assume that $\A$ is unital. Therefore, when $\E=\A$, we get $\mathcal{L}(\E)=\A$ since we can identify $t\in\CL(\A)$ with the left multiplication by $t(1)$ due to $l _{t(1)}(a)=t(1)a=t(a),\,\,a\in\A$. Therefore, inequalities \eqref{pdk1} and \eqref{pdk2} entail that a kernel $K$ is conditionally negative definite (resp., positive definite) if and only if so is the matrix $[K(s_i,s_j)]$ in $\mathbb{M}_n(\A)$ for all $s_1, \ldots, s_n\in S$. If $\E$ is a Hilbert space and $\A$ is the field $\mathbb{C}$ of scalars ($\E=\A=\mathbb{C}$, resp.), we arrive at the so-called vector-valued kernels (scalar-valued kernels, resp.) by considering left $\A$-modules.

The following result is known for scalar kernels (see \cite{BER}) and Hilbert $C^*$-modules (see \cite[Theorem 2.4]{MOS1}). Its proof is straightforward, and we omit it. 

\begin{theorem}\label{th1}
Let $L: S\times S \to \CL(\E)$ be a hermitian kernel and let $s_0\in S$ be arbitrary. Then $L$ is conditionally negative definite if and only if the kernel $K: S\times S \to \CL(\E)$ defined by
\begin{align}\label{lk}
K(s,t):=\frac{1}{2}\left[L(s,s_0) + L(s_0,t)-L(s,t)-L(s_0,s_0)\right]
\end{align}
is positive definite.
\end{theorem}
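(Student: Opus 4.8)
The plan is to reduce both implications to a single algebraic identity relating the two quadratic sums, so that the analytic input is confined to the defining inequalities of the two notions. First, since $L$ is hermitian, a one-line check using $L(a,b)^*=L(b,a)$ and $L(s_0,s_0)^*=L(s_0,s_0)$ shows that $K$ in \eqref{lk} is hermitian as well, so calling $K$ positive definite is meaningful. The core of the argument is the identity
\[
\sum_{i,j=0}^n \langle L(s_i,s_j)x_i, x_j\rangle = -2\sum_{i,j=1}^n \langle K(s_i,s_j)x_i, x_j\rangle,
\]
which holds whenever we put $x_0:=-\sum_{i=1}^n x_i$ (so that $\sum_{i=0}^n x_i=0$) and let the index $0$ correspond to the fixed base point $s_0$ of \eqref{lk}.

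To establish this identity I would expand the left-hand side by splitting off the terms carrying the index $0$: the $(0,0)$-term $\langle L(s_0,s_0)x_0,x_0\rangle$, the two mixed sums $\sum_{j\geq 1}\langle L(s_0,s_j)x_0,x_j\rangle$ and $\sum_{i\geq 1}\langle L(s_i,s_0)x_i,x_0\rangle$, and the remaining block $\sum_{i,j\geq 1}\langle L(s_i,s_j)x_i,x_j\rangle$. Substituting $x_0=-\sum_k x_k$ and using that each $L(s_i,s_j)$ is linear together with the two-sided (conjugate-)linearity of the inner product, each of these four pieces collapses onto a sum over $i,j\geq 1$; only the real scalar $-1$ ever intervenes, so no complex-conjugation subtlety arises. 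Collecting the four contributions reproduces exactly $-2$ times the right-hand side of \eqref{lk} tested against the $x_i$. The only thing to watch is careful index bookkeeping, namely tracking which variable sits inside the operator $L(s_i,s_j)$ and which occupies the second slot of the inner product.

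With the identity in hand both directions are immediate. If $L$ is conditionally negative definite, then for arbitrary $x_1,\dots,x_n$ the augmented family $x_0,x_1,\dots,x_n$ satisfies $\sum_{i=0}^n x_i=0$, so the left-hand side is $\leq 0$, which forces $\sum_{i,j\geq 1}\langle K(s_i,s_j)x_i,x_j\rangle\geq 0$; hence $K$ is positive definite. Conversely, if $K$ is positive definite and $x_1,\dots,x_n$ already satisfy $\sum_{i=1}^n x_i=0$, I would instead read \eqref{lk} as $L(s,t)=L(s,s_0)+L(s_0,t)-L(s_0,s_0)-2K(s,t)$ and substitute directly: the three $L$-terms vanish after summation because $\sum_i x_i=0$ annihilates either the argument of the operator or the second slot of the inner product, leaving $\sum_{i,j}\langle L(s_i,s_j)x_i,x_j\rangle=-2\sum_{i,j}\langle K(s_i,s_j)x_i,x_j\rangle\leq 0$.

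There is no genuine analytic obstacle here; the content lies entirely in the quadratic-form bookkeeping rather than in any estimate, which is why the statement may safely be asserted with its proof omitted. The single point deserving a moment of care is verifying that the operator action and the $\A$-valued inner product introduce no spurious factors when $x_0$ is substituted, but since all coefficients are $\pm 1$ the computation proceeds exactly as in the classical scalar and Hilbert-space settings.
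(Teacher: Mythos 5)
Your proof is correct and is exactly the standard augmentation argument (adjoining the point $s_0$ with the vector $x_0=-\sum_{i=1}^n x_i$) that the paper alludes to when it declares the proof straightforward and omits it, citing the scalar case in Berg--Christensen--Ressel and Theorem~2.4 of \cite{MOS1}; your key identity $\sum_{i,j=0}^n\langle L(s_i,s_j)x_i,x_j\rangle=-2\sum_{i,j=1}^n\langle K(s_i,s_j)x_i,x_j\rangle$ checks out, since $-2K(s_i,s_j)=L(s_0,s_0)-L(s_i,s_0)-L(s_0,s_j)+L(s_i,s_j)$ is precisely what the four blocks of the expanded sum produce. Your observation that only the real coefficients $\pm1$ occur, so the conjugate-linearity of the first slot of the $\A$-valued inner product causes no trouble, is the one module-theoretic point worth making explicit, and you made it.
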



\section{Structure of reproducing kernel Hilbert $C^*$-modules}

In this section, the first assertion provides a key theorem in a general form. Note that the set $\mathcal{S}(S,\E)$ of all maps from $S$ into $\E$ can be endowed with a right $\A$-module structure via the usual addition and scalar multiplication on functions and the module action $(f\cdot a)(s)=f(s)a,\,\,a\in\A, s\in S, f\in \mathcal{S}(S,\E)$.

\begin{theorem}\label{main1}
Let $S$ be a nonempty set, let $\E$ be a Hilbert $C^*$-module over a $C^*$-algebra $\A$, and let $K: S\times S\to \CL(\E)$ be a kernel. Then the following assertions are equivalent:
\begin{itemize}
\item[(i)] The kernel $K$ is positive definite.
\item[(ii)] There is a Hilbert $\A$-module $\HH$ consisting of $\E$-valued functions on $S$ having the following properties:
\begin{itemize}
\item[(1)] For every $x\in \E$ and every $s\in S$, the function $K_{x,s}: S\to \E$ defined by
\begin{align}\label{kxs}
K_{x,s}(t):=K(s,t)(x),\quad t \in S,
\end{align}
is in $\HH$, and the $\A$-linear span of $\{K_{x,s}: x\in\E, s\in S\}$ is dense in $\HH$.
\item[(2)] The $\A$-valued inner product on $\HH$ is defined via
\begin{align}\label{kk}
\langle K_{x,s},K_{y,t}\rangle:=\langle K_{x,s}(t),y\rangle=\langle K(s,t)(x), y\rangle,\,\, x,y\in\E, s,t\in S
\end{align}
and extended to whole $\HH$ by linearity. Indeed, for all $f\in \HH$, $x\in\E$, and $s\in S$, it holds that
\begin{align}\label{rk}
\langle f(s),x\rangle=\langle f,K_{x,s}\rangle.
\end{align}
\end{itemize}
\item[(iii)] There are a Hilbert $\A$-module $\HH$ and a feature map $\delta:S\to \CL(\HH,\E),\, s\mapsto \delta_s$, where $\delta_s(f):=f(s), f\in\HH$ is the evaluation map at $s$, such that \begin{align}\label{delta}
K(s,t)=\delta_t\delta_s^*.
\end{align} 
\end{itemize}
\end{theorem}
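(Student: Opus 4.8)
The plan is to prove the cycle of implications $(i)\Rightarrow(ii)\Rightarrow(iii)\Rightarrow(i)$, where the last two implications are short computations and essentially all of the difficulty is concentrated in constructing the reproducing kernel module in $(i)\Rightarrow(ii)$.

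For $(i)\Rightarrow(ii)$, I would start from the right $\A$-module $\HH_0$ of all finite sums $\sum_i K_{x_i,s_i}\cdot a_i$ inside $\mathcal{S}(S,\E)$ and equip it with the candidate inner product obtained by extending \eqref{kk} sesquilinearly, namely $\langle\sum_i K_{x_i,s_i}a_i,\sum_j K_{y_j,t_j}b_j\rangle:=\sum_{i,j}a_i^*\langle K(s_i,t_j)(x_i),y_j\rangle b_j$. The first task is well-definedness: I would establish the auxiliary identity $\langle f,K_{y,t}\,b\rangle=\langle f(t),y\rangle b$ for every $f\in\HH_0$, which depends on $f$ only through its values and hence shows that the pairing is independent of the chosen representation of either argument. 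Positivity $\langle f,f\rangle\geq0$ then follows from positive definiteness of $K$: writing $f=\sum_i K_{x_i,s_i}a_i$ and using that adjointable operators are $\A$-linear, one has $\langle f,f\rangle=\sum_{i,j}\langle K(s_i,s_j)(x_ia_i),x_ja_j\rangle\geq0$ by \eqref{pdk1} applied to the vectors $x_ia_i$. Finally I would verify that the semi-inner product is already definite on $\HH_0$: if $\langle f,f\rangle=0$, then \eqref{c-s} together with the identity above forces $\langle f(s),x\rangle=0$ for all $x\in\E$ and $s\in S$, whence $f\equiv0$ as a function.

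The main obstacle is to realize the completion of $\HH_0$ as a genuine space of $\E$-valued functions rather than as an abstract Hilbert module. The key estimate is that each evaluation is bounded: combining \eqref{rk}, \eqref{c-s}, and $\|K_{x,s}\|^2\leq\|K(s,s)\|\,\|x\|^2$ yields $\|f(s)\|\leq\|K(s,s)\|^{1/2}\|f\|$ for every $f\in\HH_0$. Consequently any $\|\cdot\|$-Cauchy sequence in $\HH_0$ is pointwise Cauchy in $\E$ and so admits a pointwise limit function; I would then define $\HH$ to be the set of all such limit functions, check via the evaluation bound and the density of $\HH_0$ that the assignment "Cauchy sequence $\mapsto$ pointwise limit" is well defined and injective on the abstract completion, and transport the inner product across this identification. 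Properties (1) and (2) hold by construction, with \eqref{rk} persisting to all of $\HH$ by continuity of $f\mapsto\langle f(s),x\rangle$.

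For $(ii)\Rightarrow(iii)$, I would set $\delta_s(f):=f(s)$; it is $\A$-linear and, by the evaluation bound, bounded. The reproducing identity \eqref{rk} reads $\langle\delta_s f,x\rangle=\langle f,K_{x,s}\rangle$, which exhibits $\delta_s$ as adjointable with $\delta_s^*x=K_{x,s}$, so that $\delta_t\delta_s^*x=K_{x,s}(t)=K(s,t)(x)$ and \eqref{delta} follows. Conversely, for $(iii)\Rightarrow(i)$, adjointness of each $\delta_{s_j}$ collapses the defining sum to a square in $\HH$:
\[
\sum_{i,j=1}^n\langle K(s_i,s_j)x_i,x_j\rangle=\sum_{i,j=1}^n\langle\delta_{s_i}^*x_i,\delta_{s_j}^*x_j\rangle=\Big\langle\sum_{i=1}^n\delta_{s_i}^*x_i,\sum_{j=1}^n\delta_{s_j}^*x_j\Big\rangle\geq0,
\]
which is exactly positive definiteness. (The same reproducing-property computation, run backwards, also gives $(ii)\Rightarrow(i)$ directly, providing a useful consistency check.) I expect the realization step in $(i)\Rightarrow(ii)$ to be the only genuinely delicate point, everything else being formal once the evaluation bound is in place.
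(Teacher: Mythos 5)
Your proposal is correct and follows essentially the same route as the paper: the paper itself only writes out the reproducing identity \eqref{rk} and the factorization \eqref{delta} (exactly as you do for (ii)$\Rightarrow$(iii) and the reproducing property), delegating the construction of $\HH$ in (i)$\Rightarrow$(ii) to the standard Kolmogorov-type argument of the cited references, which is precisely the construction you carry out. Your extra care over well-definedness of the pairing, definiteness via \eqref{c-s}, and the realization of the abstract completion as a space of $\E$-valued functions through the evaluation bound $\|f(s)\|\leq\|K(s,s)\|^{1/2}\|f\|$ correctly fills in the details the paper omits.
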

\begin{proof}
We prove only \eqref{rk} and \eqref{delta}. The rest can be verified by the same method as employed in the construction of reproducing kernel Hilbert spaces (see \cite{MOS1, MUR, SZ}).

To see \eqref{rk}, assume $f=K_{y,t}$. Then
\[\langle f,K_{x,s}\rangle=\langle K(t,s)(y), x\rangle= \langle f(s),x\rangle.\]
The required equality can be deduced by passing to the $\A$-linear span of $K_{y,t}$'s.

To establish \eqref{delta}, we note that the adjoint operator of $\delta_s$ is $\delta_s^*(x)=K_{x,s}$ for all $s\in S$ and $x\in \E$. It follows from \eqref{kxs} that
\[(\delta_t\delta_s^*)(x)=\delta_t(K_{x,s})=K_{x,s}(t)=K(s,t)(x).\]
\end{proof}
The Hilbert $C^*$-module $\HH$ in Theorem \ref{main1} is called the \emph{reproducing kernel Hilbert $C^*$-module} associated with $K$. Such structures are studied in some kinds of Stinespring theorems in other settings \cite{ACM, BBLS}.
The decomposition $K(s,t)=\delta_s^*\delta_t$ is called the \emph{Kolmogorov decomposition} of $K$. 
\begin{remark}\label{sum}
\begin{itemize}
\item[(i)] Because of
\begin{align}\label{ka}
\left(K_{x,s}\cdot a\right)(t)=K(s,t)(x)\cdot a= K(s,t)(x\cdot a)=K_{x\cdot a,s}(t),\quad a\in\A,
\end{align}
the $\A$-linear span of $\{K_{x,s}: x\in\E, s\in S\}$ is nothing more than its linear spanned subspace. 
\item[(ii)] If a sequence $(f_n)$ is convergent to $f$ in $\HH$, then it follows from \eqref{rk} that $\langle f_n(s),x\rangle \to \langle f(s),x\rangle$ for all $s\in S$ and $x\in\E$; in particular, in the case of classical Hilbert spaces, $(f_n(s))$ is a convergent sequence in $\mathbb{C}$ and the convergence is uniform on any set $\{s\in S: K(s,s)\leq M\}$, where $M>0$.
\item[(iii)] Motivated by \eqref{kk} and identifying $\CL(\A)$ with $\A$, one can define a positive definite kernel $\widetilde{K}$ on the set $\E\times S$ into $\A$ by $\widetilde{K}((x,s),(y,t))=\langle K(s,t)x,y\rangle$. If $\widetilde{\HH}$ is the corresponding positive definite kernel, then for every $\widetilde{f}\in\widetilde{\HH}$, there is a function $f\in\HH$ such that $\widetilde{f}(x,t)=\langle f(s),x\rangle$. To show this, it is enough to consider $\widetilde{f}$ to be the basic function $\widetilde{K}_{y,t}$ and observe that $K_{y,t}$ is the required function in $\HH$, since
\[\widetilde{K}_{y,t}(x,s)=\widetilde{K}((x,s),(y,t))=\langle K(s,t)x,y\rangle=\langle K_{y,t}(s),x\rangle.\]
\end{itemize}
\end{remark}
\begin{example}
Let $f: S\to \E$ be a nonzero function and let $K(s,t)=f(t)\otimes f(s)$. Example \ref{ex2}(i) shows that $K$ is a positive definite kernel. Its corresponding reproducing kernel Hilbert $C^*$-module is the closure of the $\A$-module generated by $f$. To show this, note that 
\[K_{x,s}(t)=K(s,t)(x)=(f(t)\otimes f(s))(x)=f(t)\langle f(s),x\rangle=(f\cdot \langle f(s),x\rangle)(t) ,\quad t\in S\]
whence
\[K_{x,s}=f\cdot\langle f(s),x\rangle =f\cdot \langle f,K_{x,s}\rangle=(f\otimes f)(K_{x,s}),\quad x\in\E, s\in S.\]
Due to the fact that $\A$-linear span of $K_{x,s}$'s is dense in $\HH$, we arrive at $f\otimes f=I_\E$, where $I_\E$ denotes the identity operator on $\E$. Hence the norm of $f$ in $\HH$ is one since $\|f\otimes f\|=\|f\|\,\|f\|$.

In the scalar case where $\A=\CC$ and $\E$ is a Hilbert space, the kernel, by taking the left module structure into account, turns into $K(s,t)=f(s)\overline{f(t)}$ and the reproducing kernel Hilbert space $\HH$ is the subspace spanned by $f$.
\end{example}

\begin{example}
Let $\F$ be a Hilbert $C^*$-module over a unital $C^*$-algebra $\A$ with unit $1$. Let $S=\F$ as a set and let $K(s,t)=l_{\langle t,s\rangle}$ as a kernel on $S$ into $\CL(\A)$, where for $a\in\A$, the operator $l_a\in\CL(\A)$ denotes the left multiplication $l_a(b)=ab,\,\,b\in\A$. The Gram matrix $[\langle s_i,s_j\rangle]_{1\leq i,j\leq n}$ is positive definite for all $s_1, \ldots, s_n\in S$ and so is its transpose. Hence, the kernel $K$ is positive definite. From the construction stated at Theorem \ref{main1}, we have
\[K_{a,s}(t)=K(s,t)(a)=l_{\langle t,s\rangle}(a)=\langle t,s\rangle a=\langle t,as\rangle.\]
Hence, $K_{a,s}=\langle \cdot, as\rangle$. Since $A$ is unital, the reproducing kernel Hilbert $C^*$-module associated with $K$ is the closed $\A$-linear span of $\{f_s:=K_{1,s}=\langle \cdot,s\rangle: s\in S\}$ endowed with the inner product 
\[\langle f_s,f_t\rangle=\langle K_{1,s},K_{1,t}\rangle=\langle K(s,t)(1), 1\rangle=\big(K(s,t)(1)\big)^*1=\big(l_{\langle t,s\rangle}(1)\big)^*=\langle s,t\rangle.\]
Moreover,
\[\langle f_t(s),a\rangle=K_{1,t}(s)^*a=\big(K(t,s)(1)\big)^*a=\langle K_{1,t},K_{a,s}\rangle=\langle f_t,K_{a,s}\rangle,\]
from which we get $\langle f(s),a\rangle=\langle f,K_{a,s}\rangle$.
It is remarkable that the Riesz representation theorem is valid for $\CL(\F,\A)$ since $\A$ is unital; see \cite[p. 13]{LAN}. In the case when $\A=\CC$ and $\F$ is a Hilbert space, the above reproducing kernel Hilbert space is the space of all bounded linear functionals on $\F$.
\end{example}

\begin{example}
Suppose that $K_0: S\times S\to \mathbb{C}$ is a positive definite kernel with the associated reproducing kernel Hilbert space $\mathcal{H}$ and that $\E$ is an arbitrary Hilbert $C^*$-module. Then $K: S\times S\to \CL(\E)$ defined by $K(s,t)=K_0(s,t)I_\E$ is trivially a positive definite kernel. In virtue of \eqref{kk}, we have
\[\langle K_{x,s},K_{y,t}\rangle=\langle K(s,t)(x), y\rangle=K_0(s,t)\langle x,y\rangle=\langle K_{0_{s}},K_{0_{t}}\rangle\langle x,y\rangle,\,\, x\in\E, s\in S ,\]
from which we conclude that the reproducing kernel Hilbert $C^*$-module associated with $K$ is $\HH={\mathcal E}\otimes \E$ (see \cite[p. 6]{LAN}) equipped with the $\A$-valued inner product defined by
\[\langle\xi\otimes x,\eta\otimes y\rangle:=\langle \xi,\eta\rangle\langle x,y\rangle.\]
\end{example}
\begin{proposition}\label{main3}
Let $\HH$ be a reproducing kernel Hilbert $C^*$-module with kernel $K: S\times S\to \CL(\E)$ and let $f\in\HH$. Then the kernel $\|f\|K(s,t)-f(t)\otimes f(s)$ is positive definite.
\end{proposition}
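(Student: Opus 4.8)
The plan is to check the positive-definiteness inequality \eqref{pdk1} directly: fix $n\ge 2$, points $s_1,\dots,s_n\in S$, and vectors $x_1,\dots,x_n\in\E$, and compute the $\A$-valued sum $\Sigma:=\sum_{i,j=1}^n\langle(\|f\|K(s_i,s_j)-f(s_j)\otimes f(s_i))(x_i),x_j\rangle$. The unifying device is the single vector $g:=\sum_{i=1}^n K_{x_i,s_i}\in\HH$, which lies in $\HH$ because each $K_{x_i,s_i}$ does by Theorem~\ref{main1}; the whole computation is designed to express $\Sigma$ in terms of $g$ and $f$ so that one application of the Cauchy--Schwarz inequality \eqref{c-s} finishes the job.

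First I would treat the kernel term. The inner product formula \eqref{kk} gives $\langle K(s_i,s_j)(x_i),x_j\rangle=\langle K_{x_i,s_i},K_{x_j,s_j}\rangle$, so summing over $i,j$ and using $\A$-bilinearity of $\langle\cdot,\cdot\rangle$, the kernel term contributes $\langle g,g\rangle$. Next I would treat the rank-one term. Unwinding $(f(s_j)\otimes f(s_i))(x_i)=f(s_j)\langle f(s_i),x_i\rangle$ from the definition of $\otimes$ in Example~\ref{ex2}, and using the module axioms to move the scalar across the inner product, I obtain $\langle f(s_j)\langle f(s_i),x_i\rangle,x_j\rangle=\langle f(s_i),x_i\rangle^*\langle f(s_j),x_j\rangle$, whence the rank-one term factorizes as $\big(\sum_i\langle f(s_i),x_i\rangle\big)^*\big(\sum_j\langle f(s_j),x_j\rangle\big)$. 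The reproducing property \eqref{rk} rewrites each $\langle f(s_i),x_i\rangle$ as $\langle f,K_{x_i,s_i}\rangle$, so $\sum_i\langle f(s_i),x_i\rangle=\langle f,g\rangle$ and the rank-one term equals $\langle g,f\rangle\langle f,g\rangle$.

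Collecting the two contributions gives $\Sigma=\|f\|\,\langle g,g\rangle-\langle g,f\rangle\langle f,g\rangle$, and the positivity of $\Sigma$ is then secured by the Cauchy--Schwarz inequality \eqref{c-s} applied to the pair $(g,f)$, which bounds the subtracted term by a scalar multiple of $\langle g,g\rangle$. I expect the delicate points to be algebraic bookkeeping rather than anything conceptual: keeping the adjoints and the order of the factors correct in the rank-one term, where the $\A$-valued inner product is conjugate-linear (with an adjoint) in its first argument, and matching the scalar weight of $\langle g,g\rangle$ with what \eqref{c-s} actually yields, namely $\langle g,f\rangle\langle f,g\rangle\le\|f\|^2\langle g,g\rangle$. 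No density or limiting argument is needed, since the reduction of $\Sigma$ to a scalar multiple of $\langle g,g\rangle$ minus $\langle g,f\rangle\langle f,g\rangle$ is an exact identity for every finite family, so the conclusion follows once this weight is in place.
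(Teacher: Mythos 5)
Your computation is correct and your route is essentially the paper's: both arguments reduce to the Cauchy--Schwarz inequality \eqref{c-s} for the $\A$-valued inner product combined with the reproducing identity \eqref{rk}. The only difference is organisational: the paper works with the positive matrix $[\langle g_i,g_j\rangle_f]\in\mathbb{M}_n(\A)$ built from the semi-inner product $\langle g,h\rangle_f=\|f\|^2\langle g,h\rangle-\langle g,f\rangle\langle f,h\rangle$ via Example \ref{ex1}(ii), whereas you compress everything into the single vector $g=\sum_iK_{x_i,s_i}$ and test \eqref{pdk1} directly. Your two identities, $\sum_{i,j}\langle K(s_i,s_j)x_i,x_j\rangle=\langle g,g\rangle$ and $\sum_{i,j}\langle(f(s_j)\otimes f(s_i))x_i,x_j\rangle=\langle g,f\rangle\langle f,g\rangle$, are both right.

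The gap is exactly the point you flag and then wave past as ``bookkeeping'': it is not bookkeeping. Cauchy--Schwarz gives $\langle g,f\rangle\langle f,g\rangle\le\|f\|^2\langle g,g\rangle$, so what your computation proves is that $\|f\|^2K(s,t)-f(t)\otimes f(s)$ is positive definite; it does not prove the statement with the first power of $\|f\|$, and no rearrangement will, because that statement is false in general. Your $\Sigma=\|f\|\langle g,g\rangle-\langle g,f\rangle\langle f,g\rangle$ is only bounded below by $\|f\|(1-\|f\|)\langle g,g\rangle$, and for $\|f\|>1$ the claim genuinely fails: take $S=\{s_0\}$, $\E=\A=\CC$, $K\equiv 1$, so that $\HH\cong\CC$ consists of the constant functions with $\|f\|=|f(s_0)|$; for $f(s_0)=2$ one gets $\|f\|K(s_0,s_0)-f(s_0)\otimes f(s_0)=2-4=-2$, which is not a positive operator. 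So you should either prove the $\|f\|^2$ version (your argument does this verbatim once the weight is corrected) or add the hypothesis $\|f\|\le1$. For what it is worth, the paper's own proof contains the same slip: it correctly establishes $\|f\|^2\big[\langle g_i,g_j\rangle\big]\ge\big[\langle g_i,f\rangle\langle f,g_j\rangle\big]$ in \eqref{bak} and then silently drops the exponent in the next display, so the discrepancy originates in the statement of the proposition rather than in your argument; you have correctly located it, but you cannot close it for the statement as printed.
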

\begin{proof}
Applying the Cauchy--Schwarz inequality \eqref{c-s}, one can define a semi-inner product on $\HH$ by
\begin{align*} 
\langle\cdot,\cdot\rangle_f: \HH \times \HH\to\A,\quad \langle g,h\rangle_f:= \|f\|^2\,\langle g,h\rangle-\langle g,f\rangle \langle f,h\rangle.
\end{align*}
Evidently, $\langle\cdot,\cdot\rangle_f$ is a semi-inner product on $\HH$; see \cite{ARA}. Applying Example \ref{ex1}(ii) to $\langle\cdot,\cdot\rangle_f$ yields that $[\langle g_i,g_j\rangle_f]\geq 0$ in $\mathbb{M}_n(\A)$, which entails that 
\begin{align}\label{bak}
\|f\|^2\big[\langle g_i, g_j\rangle\big]\ge \big[\langle g_i,f\rangle\langle f,g_j\rangle\big]
\end{align}
for all $g_1,\ldots, g_n\in\HH$. 

Next, let $s_1,\ldots, s_n\in S$ and let $x_1, \ldots, x_n\in \E$. We have
\begin{align*}
\big[\langle (f(s_j)\otimes f(s_i))x_i,x_j\rangle\big]&=\big[\langle f(s_j)\langle f(s_i),x_i\rangle ,x_j\rangle\big]=\big[\langle x_i,f(s_i)\rangle\langle f(s_j),x_j\rangle\big]\\
&=\big[\langle K_{x_i,s_i},f\rangle\langle f,K_{x_j,s_j}\rangle\big]\leq \big[\|f\|\langle K_{x_i,s_i},K_{x_j,s_j}\rangle\big]\\
&=\big[\|f\|\langle K(s_i,s_j)x_i,x_j\rangle\big].\\
\end{align*}
Thus the kernel $\|f\|K(s,t)-f(t)\otimes f(s)$ corresponded to the positive definite matrix 
$\big[\|f\|\langle \big(K(s_i,s_j) -f(s_j)\otimes f(s_i)\big)x_i,x_j\rangle$
is positive definite.
\end{proof}
\begin{remark}
The converse of Proposition \ref{main3} is true in the set-up of Hilbert spaces, in this form that if $f: S\to \CC$ is a function such that, for some $m>0$ depending on $f$, we have
\[\left|\sum_{i=1}^nf(s_i)\lambda_i\right|^2\leq m^2\sum_{i,j=1}^nK(s_i,s_j)\lambda_i\overline{\lambda_j},\quad s_1, \ldots, s_n\in S, \lambda_1, \ldots, \lambda_n\in \CC,\]
then $f\in \HH$ and $\|f\|$ is equal to the smallest $m$ satisfying the above inequality. This property is called the \emph{RKHS test}. It is not known to the author whether the converse of 
Proposition \ref{main3} is true without any self-duality assumption; see \cite[Proposition 4]{SZ}.
\end{remark}
To achieve the next result, we need a version of the Douglas lemma in the setup of Hilbert $C^*$-modules; see also \cite{MMX}.

\begin{lemma}\cite[Theorem 3.2]{FAN} \label{fang} Let $\E$ and $\KK$ be Hilbert $\mathfrak{A}$-modules and let $B$ be in $\mathcal{L}(\E,\KK)$. Then the following statements are equivalent:
\begin{enumerate}
\item[{\rm (i)}] $\overline{{\rm ran}(B^*)}$ is orthogonally complemented;
\item[{\rm (ii)}] For any Hilbert $\mathfrak{A}$-module $\F$ and any $A\in {\mathcal L}(\F,\KK)$, the equation
$A =BX$ for $X\in {\mathcal L}(\F,\E)$ is solvable whenever ${\rm ran}(A)\subseteq {\rm ran}(B)$;
\end{enumerate}
If condition {\rm (i)} is fulfilled and $A\in {\mathcal L}(\F,\KK)$ is such that ${\rm ran}(A)\subseteq {\rm ran}(B)$, then there exists a unique $C\in {\mathcal L}(\F,\E)$ satisfying
\[A=BC\quad \mbox{and}\quad {\rm ran}(C)\subseteq \ker(B)^\perp.\]
In this case, $\|C\|^2=\inf\left\{\lambda : AA^*\leq \lambda BB^*\right\}$.
\end{lemma}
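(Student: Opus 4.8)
The plan is to read this as a Douglas-type factorization theorem and prove the two implications separately, extracting the ``moreover'' part alongside (i)$\Rightarrow$(ii). Before starting I would record the standing identity $\ker(B)=\big(\overline{{\rm ran}(B^*)}\big)^{\perp}$, valid for any adjointable $B$ since $Bx=0$ iff $\langle x,B^*y\rangle=0$ for all $y$. Thus (i) is equivalent to the orthogonal decomposition $\E=\overline{{\rm ran}(B^*)}\oplus\ker(B)$, and under (i) one has $\ker(B)^{\perp}=\overline{{\rm ran}(B^*)}$. I would also invoke the standard Hilbert-module fact $\|z\|=\sup_{\|\xi\|\le1}\|\langle\xi,z\rangle\|$, which turns inner-product estimates into norm estimates.

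For (i)$\Rightarrow$(ii) together with the moreover part, fix $A$ with ${\rm ran}(A)\subseteq{\rm ran}(B)$ and let $Q$ be the orthogonal projection of $\E$ onto $\overline{{\rm ran}(B^*)}$ afforded by (i). For each $\xi\in\F$ choose $\eta$ with $B\eta=A\xi$ and set $C\xi:=Q\eta$; since $B$ annihilates $\ker(B)$, we get $BC\xi=A\xi$, and $C\xi$ is the \emph{unique} preimage lying in $\ker(B)^{\perp}$, because two such differ by an element of $\ker(B)\cap\ker(B)^{\perp}=\{0\}$. This uniqueness makes $C$ additive and $\A$-linear and forces its graph to be closed (if $\xi_n\to\xi$ and $C\xi_n\to\zeta$, then $A\xi_n=BC\xi_n\to B\zeta$ and $A\xi_n\to A\xi$, so $B\zeta=A\xi$ with $\zeta\in\ker(B)^{\perp}$, whence $\zeta=C\xi$); the Banach-space closed graph theorem then gives boundedness of $C$. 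Adjointability is the crux of this direction: I would define $C^{\sharp}$ on ${\rm ran}(B^*)$ by $C^{\sharp}(B^*w):=A^*w$, verify it is well defined and bounded via $\|A^*w\|=\sup_{\|\xi\|\le1}\|\langle C\xi,B^*w\rangle\|\le\|C\|\,\|B^*w\|$, extend it by continuity to $\overline{{\rm ran}(B^*)}$ and by zero on $\ker(B)$ using the decomposition from (i); a short computation then yields $\langle C\xi,z\rangle=\langle\xi,C^{\sharp}z\rangle$, so $C$ is adjointable with $C^{*}=C^{\sharp}$ and ${\rm ran}(C)\subseteq\ker(B)^{\perp}$. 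Uniqueness of $C$ subject to this range condition again follows from $\ker(B)\cap\ker(B)^{\perp}=\{0\}$, while the norm formula comes from the two estimates $AA^*=BCC^*B^*\le\|C\|^2BB^*$ (so the infimum is $\le\|C\|^2$) and, whenever $AA^*\le\lambda BB^*$, $\|C^{\sharp}(B^*w)\|^2=\|\langle AA^*w,w\rangle\|\le\lambda\|B^*w\|^2$, giving $\|C\|^2\le\lambda$.

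For (ii)$\Rightarrow$(i) I would apply the factorization hypothesis to $A=B$ itself (with $\F=\E$), producing an adjointable $R\in\mathcal{L}(\E)$ with $BR=B$; passing to adjoints gives $R^{*}B^{*}=B^{*}$, so $R^{*}$ fixes $\overline{{\rm ran}(B^*)}$ pointwise and ${\rm ran}(I-R)\subseteq\ker(B)$. The target is to manufacture from $R$ a self-adjoint idempotent with range exactly $\overline{{\rm ran}(B^*)}$, which is equivalent to orthogonal complementability. I expect this to be the \emph{main obstacle}: a single solution of $BX=B$ only yields an oblique, generally non-self-adjoint ``projection modulo $\ker(B)$'', and promoting it to a genuine orthogonal projection is precisely where complementability has to be squeezed out. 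My plan is therefore to isolate the reduced solution (the one with range in $\ker(B)^{\perp}$) and show that its associated idempotent is self-adjoint; failing a direct construction, I would argue contrapositively, exhibiting, when $\overline{{\rm ran}(B^*)}$ is not complemented, an operator $A$ with ${\rm ran}(A)\subseteq{\rm ran}(B)$ whose bounded factorizer is not adjointable. Here one must use that ``small'' $A$ (e.g. $a\mapsto (Bz_0)a$ from $\A$) always factor trivially, so the witnessing $A$ must have genuinely large range, making adjointability, rather than mere solvability, the property that breaks down.
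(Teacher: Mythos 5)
The paper does not actually prove this lemma: it is quoted verbatim from \cite[Theorem 3.2]{FAN}, so there is no internal proof to compare with and your attempt must stand on its own. The direction (i)$\Rightarrow$(ii) together with the ``moreover'' clause is carried out correctly and completely: the construction $C\xi=Q\eta$ via the orthogonal projection $Q$ onto $\overline{\mathrm{ran}(B^*)}=\ker(B)^\perp$, the closed-graph argument for boundedness, the definition of the adjoint on the dense submodule $\mathrm{ran}(B^*)+\ker(B)$ by $C^{\sharp}(B^*w)=A^*w$ with the estimate $\|A^*w\|=\sup_{\|\xi\|\le1}\|\langle \xi,A^*w\rangle\|\le\|C\|\,\|B^*w\|$, and the two-sided norm identity via $AA^*=BCC^*B^*\le\|C\|^2BB^*$ and $\|A^*w\|^2=\|\langle AA^*w,w\rangle\|\le\lambda\|B^*w\|^2$ are all sound. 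This is the standard Douglas argument transplanted to Hilbert modules, and it is the only part of the lemma the paper actually uses (in Theorem \ref{main4}, where $\overline{{\rm ran}(B^*)}=\HH$ is trivially complemented).

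The genuine gap is (ii)$\Rightarrow$(i), which you explicitly leave as a plan rather than a proof. Applying (ii) to $A=B$ does yield $R$ with $BR=B$, hence ${\rm ran}(I-R)\subseteq\ker(B)$ and $R^*$ fixing $\overline{{\rm ran}(B^*)}$ pointwise, but, as you yourself observe, this produces only an oblique splitting, and neither of your two fallback strategies (symmetrizing $R$ into a self-adjoint idempotent, or a contrapositive construction of a non-factorizable $A$) is actually executed: you exhibit neither the orthogonal projection nor the witnessing operator. As it stands, nothing in your argument excludes the possibility that every $A$ with ${\rm ran}(A)\subseteq{\rm ran}(B)$ factors adjointably through $B$ while $\overline{{\rm ran}(B^*)}$ still fails to be complemented. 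To close the gap you would need either to extract the orthogonal projection from the full family of solutions (a single solution of $BX=B$ is demonstrably insufficient, since it carries no information about $X$ on $\ker(B)$), or to construct, for a non-complemented $\overline{{\rm ran}(B^*)}$, a concrete $A$ whose reduced solution exists as a bounded module map but is not adjointable. Since the paper only invokes (i)$\Rightarrow$(ii) and the norm formula, the missing direction does not affect any result in the paper, but it does leave the lemma as stated unproven.
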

It is easy to verify that the right $\A$-module $\E^n$ can be equipped with a $C^*$-inner product via 
\[\langle (x_1, \ldots, x_n),(y_1, \ldots, y_n)\rangle=\sum_{i=1}^n\langle x_i,y_i\rangle.\]
In addition, the $C^*$-algebra $\CL(\E^n)$ can be naturally identified with the $C^*$-algebra $\mathbb{M}_n(\CL(\E))$. Hence, when we are dealing with the action of a matrix of operators in $\mathbb{M}_n(\CL(\E))$ on $\E^n$, we represent the elements of $\E^n$ in the form of a column vector $(x_1, \ldots, x_n)^t$.

\begin{theorem}\label{main4}
Let $\HH$ be a reproducing kernel Hilbert $C^*$-module over a unital $C^*$-algebra $\A$ with kernel $K: S\times S\to \CL(\E)$ and let $f: S\to \E$ be a function. If the kernel $mK(s,t)-f(t)\otimes f(s)$ is positive definite for some $m>0$, then for every $s_1,\ldots, s_n\in S$, there is a function $h\in \HH$ such that 
$\|h\|\leq m$ and $f(s_i) = h (s_i)$ for $i=1, \ldots, n$.
\end{theorem}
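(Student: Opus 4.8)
The plan is to convert the interpolation requirement at the finite node set $\{s_1,\dots,s_n\}$ into a single operator factorization over $\E^{n}$ and then to invoke the Hilbert $C^{*}$-module Douglas lemma (Lemma \ref{fang}). First I would package the evaluation maps of Theorem \ref{main1} into one adjointable operator
\[
V:=(\delta_{s_1},\dots,\delta_{s_n})^{t}\in\CL(\HH,\E^{n}),\qquad Vg=(g(s_1),\dots,g(s_n))^{t},
\]
whose adjoint is $V^{*}(x_1,\dots,x_n)^{t}=\sum_{i=1}^{n}\delta_{s_i}^{*}x_i=\sum_{i=1}^{n}K_{x_i,s_i}$, so that, by \eqref{delta} and the identification $\CL(\E^{n})=\mathbb{M}_n(\CL(\E))$, the operator $VV^{*}=[\delta_{s_j}\delta_{s_i}^{*}]=[K(s_i,s_j)]$ is exactly the kernel matrix on the nodes. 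Writing $\mathbf{f}=(f(s_1),\dots,f(s_n))^{t}\in\E^{n}$, I would encode the target data by $\theta\in\CL(\A,\E^{n})$, $\theta(a)=\mathbf{f}\,a$, for which $\theta^{*}(x_1,\dots,x_n)^{t}=\sum_{i}\langle f(s_i),x_i\rangle$ and $\theta\theta^{*}=[f(s_j)\otimes f(s_i)]$.

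The crucial translation is that positive definiteness of $mK(s,t)-f(t)\otimes f(s)$, tested on $s_1,\dots,s_n$, is nothing but
\[
\theta\theta^{*}\le m\,VV^{*}\qquad\text{in }\CL(\E^{n});
\]
indeed $mVV^{*}-\theta\theta^{*}=[mK(s_i,s_j)-f(s_j)\otimes f(s_i)]$, and for every $\mathbf{x}\in\E^{n}$ one has $\langle(mVV^{*}-\theta\theta^{*})\mathbf{x},\mathbf{x}\rangle=\sum_{i,j}\langle(mK(s_i,s_j)-f(s_j)\otimes f(s_i))x_i,x_j\rangle\ge0$, so the criterion \eqref{pos} gives the inequality. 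I would then apply Lemma \ref{fang} with $B=V$ and $A=\theta$: under the complementation hypothesis below, the inequality yields $\mathrm{ran}(\theta)\subseteq\mathrm{ran}(V)$ and a unique $C\in\CL(\A,\HH)$ with $\theta=VC$ and $\|C\|^{2}=\inf\{\lambda:\theta\theta^{*}\le\lambda VV^{*}\}\le m$. Setting $h:=C(1_{\A})\in\HH$, the module identity $Vh=(VC)(1_{\A})=\theta(1_{\A})=\mathbf{f}$ gives $h(s_i)=f(s_i)$ for all $i$, while $\A$-linearity of $C$ (so $C(a)=C(1_{\A})a=ha$) forces $\|C\|=\|h\|$; hence $\|h\|^{2}\le m$. (Under the normalization $m^{2}K-f\otimes f\ge0$ in which Proposition \ref{main3} and the RKHS test are phrased, the same computation returns the stated bound $\|h\|\le m$.)

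The main obstacle is the standing hypothesis of Lemma \ref{fang} that $\overline{\mathrm{ran}(V^{*})}$ be orthogonally complemented in $\HH$, equivalently $\HH=\overline{\mathrm{ran}(V^{*})}\oplus\ker V$. Here $\overline{\mathrm{ran}(V^{*})}$ is the closed $\A$-linear span of the sections $\{K_{x,s_i}:x\in\E,\ 1\le i\le n\}$, and a short computation from \eqref{rk} shows its orthogonal complement is exactly $\ker V=\{g\in\HH:g(s_i)=0\ \text{for all }i\}$; thus the complementation is equivalent to being able to split off an interpolant, which is precisely where the content lies. I would discharge it by noting that it is automatic in the scalar- and vector-valued settings, where $V$ takes values in a finite-dimensional space and so has closed---hence orthogonally complemented---range, and that it persists whenever $V$ has closed range or $\HH$ is self-dual; supplying exactly this splitting is the role for which Lemma \ref{fang} is invoked. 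Once the complementation is secured, the factorization $\theta=VC$ is legitimate and the argument closes.
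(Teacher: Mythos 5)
Your proposal is, in substance, the paper's own proof: the same two operators (your $\theta$ and $V$ are the paper's $A(a)=(f(s_1)a,\dots,f(s_n)a)$ and $B=(\delta_{s_1},\dots,\delta_{s_n})$), the same reduction of the kernel hypothesis to the operator inequality $\theta\theta^{*}\le m\,VV^{*}$ in $\mathbb{M}_n(\CL(\E))\cong\CL(\E^{n})$, the same appeal to the Douglas-type Lemma \ref{fang}, and the same interpolant $h=C(1)$. The only divergences lie in how the two hypotheses of Lemma \ref{fang} are discharged. For the range inclusion you assert that the inequality $\theta\theta^{*}\le m\,VV^{*}$ \emph{yields} $\mathrm{ran}(\theta)\subseteq\mathrm{ran}(V)$; this is not what Lemma \ref{fang} provides --- in Hilbert $C^{*}$-modules majorization of $AA^{*}$ by $BB^{*}$ does not by itself give range inclusion, and the lemma takes $\mathrm{ran}(A)\subseteq\mathrm{ran}(B)$ as a separate hypothesis. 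The paper instead checks it directly by writing $A(a)=((f\cdot a)(s_1),\dots,(f\cdot a)(s_n))\in\mathrm{ran}(B)$, so you should either reproduce that verification or cite a result upgrading majorization to range inclusion. For the complementation of $\overline{\mathrm{ran}(V^{*})}$, the paper argues that this closure is all of $\HH$ (hence trivially complemented), whereas you correctly note that $\mathrm{ran}(V^{*})$ only involves the sections at the finitely many nodes $s_1,\dots,s_n$ and you leave the complementation as a conditional hypothesis; your reading is the more careful one, but as written your argument only closes under the extra assumptions you list (finite-dimensional target, closed range, or self-duality), so this step is not fully discharged in the generality of the statement. Finally, your parenthetical about the normalization is apt: Lemma \ref{fang} gives $\|C\|^{2}\le m$, i.e.\ $\|C\|\le\sqrt{m}$, which yields the stated bound $\|h\|\le m$ only when $m\ge 1$; this wrinkle is present in the paper's proof as well.
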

\begin{proof} Suppose that $s_1,\ldots, s_n\in S$. Let $A: \A\to \E^n$ be  defined by \[A(a)=(f(s_1)a, \ldots, f(s_n)a).\]
Then $A^*:\E^n \to \A$ takes $(x_1, \ldots, x_n)\in \E^n$ to $\sum_{i=1}^n\langle f(s_i),x_i\rangle$ since $\langle As,x\rangle=\sum_{i=1}^n\langle f(s_i)a,x_i\rangle=\langle a,A^*x\rangle$.
In addition,
\begin{align*}
\left\langle AA^*(x_1, \ldots, x_n)^t, (x_1, \ldots, x_n)^t\right\rangle&=\left(\sum_{j=1}^n\langle f(s_j),x_j\rangle\right)^*\left(\sum_{i=1}^n\langle f(s_i),x_i\rangle\right)\\
&= \sum_{i,j=1}^n\langle x_j,f(s_j)\rangle\langle f(s_i),x_i\rangle\\
&=\sum_{i,j=1}^n\langle (f(s_i)\otimes f(s_j))x_j,x_i\rangle\\
&=\left\langle \Big(f(s_i)\otimes f(s_j)\Big) (x_1, \ldots, x_n)^t, (x_1, \ldots, x_n)^t\right\rangle.
\end{align*}
Passing to the equivalent definition of positive definiteness of kernels in terms of matrices and using Theorem \ref{main1}(iii), we get
\[AA^*\leq \Big(f(s_i)\otimes f(s_j)\Big) \leq m\Big(K(s_j,s_i)\Big)\leq m\Big(\delta_{s_i}\delta_{s_j}^*\Big)=mBB^*\]
in $\mathbb{M}_n(\CL(\E))$, where $B=\big(\delta_{s_1},\ldots, \delta_{s_n}\big)\in \CL(\HH,\E^n)$.
Furthermore, ${\rm ran}(A) \subseteq {\rm ran}(B)$ since
\[ A(a)=(f(s_1)a, \ldots, f(s_n)a)=((f\cdot a)(s_1), \ldots, (f\cdot a)(s_n))\in {\rm ran}(B).\]
Moreover, in virtue of Theorem \ref{main1},
\begin{align}\label{ka}
\left(\lambda K_{x,s}\right)(t)=\lambda K(s,t)(x)= K(s,t)(\lambda x)=K_{\lambda x,s}(t),\quad \lambda\in\CC.
\end{align}
Hence the linear span of $\{K_{x,s}: x\in\E, s\in S\}$ is nothing more than the set of all finite sums of elements of this set. Thus $\overline{{\rm ran}(B^*)}$ is the closure of $\{\sum_{i=1}^nK_{x_i,s_i}: x_i\in \E, s_i\in S, 1\leq i\leq n, n\in\mathbb{N}\}$ that is $\HH$. Applying Lemma \ref{fang}, we infer that there is a unique operator $C\in\CL(\A,\HH)$ such that ${\rm ran}(C)\subseteq \ker(B)^\perp=\{g\in\HH: g(s_1)=\dots=g(s_n)=0\}^\perp$, $\|C\|\leq m$, and $A=BC$. Therefore, if $h=C(1)$, then
\[(f(s_1), \ldots, f(s_n))=A(1)=BC(1)=(\delta_1(h), \ldots, \delta_n(h))=(h(s_1), \ldots, h(s_n)),\]
whence
$f(s_i)=h(s_i)$ for all $i\in\{1, \ldots, n\}$.
\end{proof}
We denote by $\mathbb{K}(\mathcal{H})$ the $C^*$-algebra of all compact operators acting on a Hilbert space $\mathcal{H}$. It is known from \cite[Theorem 1.4.5]{ARV} that a $C^*$-algebra $\A$ of (not necessarily all) compact operators is a $c_0$-direct sum of $C^*$-algebras of the form $\mathbb{K}(\mathcal{H}_i)$, in other words, $\A=c_0$-$\oplus_i\mathbb{K}(\mathcal{H}_i)$. Such $C^*$-algebras enjoy many significant properties. One of them is stated below. 
\begin{lemma} \cite{MAG} \label{c1} Let $\A$ be a $C^*$-algebra. The following conditions are equivalent:
\begin{itemize}
\item[(i)] $\A=c_0$-$\oplus_i\mathbb{K}(\mathcal{H}_i)$, that is, it has a faithful $*$-representation
as a $C^*$-algebra of compact operators on some Hilbert space.
\item[(ii)] For every Hilbert $\A$-module $\E$, every Hilbert $\A$-submodule is automatically orthogonally complemented in $\E$.
\end{itemize}
\end{lemma}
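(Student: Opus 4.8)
The plan is to prove the two implications separately: I would reduce (i)$\Rightarrow$(ii) to the single building block $\mathbb{K}(\mathcal{H})$ by exploiting a Morita equivalence, and attack (ii)$\Rightarrow$(i) by testing the complementation hypothesis on $\A$ regarded as a Hilbert module over itself, thereby converting it into a statement about closed right ideals.

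For (i)$\Rightarrow$(ii), recall that a closed submodule $\F\subseteq\E$ is orthogonally complemented precisely when there is a self-adjoint idempotent $P\in\CL(\E)$ whose range is $\F$; this is an invariant preserved by any unitary equivalence of Hilbert modules and hence by the equivalence of module categories induced by a Morita equivalence of the underlying algebras. Since a Hilbert module over $c_0$-$\oplus_i\mathbb{K}(\mathcal{H}_i)$ splits compatibly along the central projections attached to the summands, and an orthogonal projection may be reassembled summand by summand, it suffices to treat $\A=\mathbb{K}(\mathcal{H})$. But $\mathbb{K}(\mathcal{H})$ is Morita equivalent to $\CC$ through the imprimitivity bimodule $\mathcal{H}$, and the resulting equivalence between the category of Hilbert $\mathbb{K}(\mathcal{H})$-modules and that of Hilbert spaces carries closed submodules to closed subspaces while respecting orthogonal complementation. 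As every closed subspace of a Hilbert space is orthogonally complemented by the projection theorem, so is every closed submodule of every Hilbert $\mathbb{K}(\mathcal{H})$-module, and reassembling the summands yields (ii) in full.

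For (ii)$\Rightarrow$(i), I would apply the hypothesis to the Hilbert $\A$-module $\E=\A$, whose closed submodules are exactly its closed right ideals. Thus (ii) forces $\A=J\oplus J^{\perp}$ for every closed right ideal $J$, where $J^{\perp}=\{a\in\A:\ b^{*}a=0 \text{ for all } b\in J\}$; since taking adjoints interchanges closed right and left ideals and their annihilators, this means $J^{\perp\perp}=J$ for all closed one-sided ideals, which is precisely the assertion that $\A$ is a dual (annihilator) $C^{*}$-algebra. By the classical Kaplansky structure theorem for dual $C^{*}$-algebras, such an algebra is a $c_0$-direct sum of elementary algebras $\mathbb{K}(\mathcal{H}_i)$, which is (i). Concretely, the complementing projection onto each $J$ is implemented by a projection in $\CL(\A)=M(\A)$, and identifying the minimal closed right ideals with rank-one projections recovers the compact-operator picture on each summand.

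The main obstacle lies in the direction (ii)$\Rightarrow$(i): translating the purely module-theoretic complementation hypothesis into the hard structural classification. The delicate point is that, in the non-unital case, the complementing projection a priori lives in the multiplier algebra $M(\A)=\CL(\A)$ rather than in $\A$ itself, so that recognizing $\A$ as a dual $C^{*}$-algebra and then invoking the Kaplansky theory is genuine structure theory rather than a formal manipulation. A secondary care point, on the (i)$\Rightarrow$(ii) side, is verifying that the Morita-induced correspondence between submodules and subspaces is a bijection onto \emph{all} closed submodules and that the summand-by-summand reassembly of projections converges strictly when there are infinitely many summands $\mathbb{K}(\mathcal{H}_i)$.
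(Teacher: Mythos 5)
This lemma is not proved in the paper at all: it is quoted verbatim from Magajna's article [MAG], so there is no internal proof to measure your argument against. Taken on its own terms, your two-part strategy is sound. For (i)$\Rightarrow$(ii) the reduction to a single block $\mathbb{K}(\mathcal{H})$ and then to Hilbert spaces via the imprimitivity bimodule $\mathcal{H}$ is essentially the standard localization argument (form the Hilbert space $\E\otimes_{\mathbb{K}(\mathcal{H})}\mathcal{H}$ and pull the orthogonal projection back through the isomorphism $\CL(\E)\cong\mathcal{L}(\E\otimes\mathcal{H})$), which is close in spirit to Magajna's own treatment of that direction. For (ii)$\Rightarrow$(i) your route is a genuinely clean one: specializing to $\E=\A$ turns complementation of closed submodules into the statement that every proper closed right ideal $J$ has $J^{\perp}=l(J)^{*}\neq\{0\}$, i.e.\ that $\A$ is an annihilator (equivalently dual) $C^{*}$-algebra, and Kaplansky's classical structure theorem then delivers $\A\cong c_0$-$\oplus_i\mathbb{K}(\mathcal{H}_i)$. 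A pleasant by-product of your argument, worth stating explicitly, is that hypothesis (ii) is only needed for the single module $\E=\A$.

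Two points deserve more than the wave you give them. First, in (i)$\Rightarrow$(ii) the load-bearing claim is that $\GG\mapsto\GG\otimes_{\mathbb{K}(\mathcal{H})}\mathcal{H}$ is a \emph{bijection} from closed submodules of $\E$ onto closed subspaces of $\E\otimes\mathcal{H}$ and that the projection onto $\GG\otimes\mathcal{H}$ is of the form $P\otimes 1$ with $P\in\CL(\E)$; both facts are true (surjectivity via the inverse functor $-\otimes\mathcal{H}^{*}$ together with $\E=\overline{\E\,\mathbb{K}(\mathcal{H})}$, and $\CL(\E)\cong M(\mathcal{K}(\E))\cong\mathcal{L}(\E\otimes\mathcal{H})$), but they are exactly where the theorem lives and should be proved, not asserted as formal consequences of "Morita invariance." Second, both directions import a substantial classical theorem (the Rieffel-type correspondence on one side, Kaplansky's annihilator theorem on the other), so the argument is an honest reduction rather than a self-contained proof; that is acceptable here, since the paper itself treats the statement as a citation.
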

To establish the next result, we need the following lemma.
\begin{lemma}\cite[Theorem 2.4]{MOU}\label{c2}
Let $A, C\in \mathcal{L}(\mathscr{E})$. Suppose that $\overline{{\rm ran}(A^*)}$ and $\overline{{\rm ran}(B^*)}$ are orthogonally complemented in $\mathscr{E}$. If $AA^*=\lambda BB^*$ for some $\lambda >0$, then ${\rm ran}(A)={\rm ran}(B)$.
\end{lemma}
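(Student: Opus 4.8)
The plan is to exploit the \emph{equality} $AA^*=\lambda BB^*$ to manufacture a partial isometry in $\CL(\mathscr{E})$ that intertwines $A^*$ and $B^*$, and then to read off the two range inclusions by taking adjoints. The starting observation, valid for every $y\in\mathscr{E}$, is
\[
\langle A^*y,A^*y\rangle=\langle AA^*y,y\rangle=\lambda\langle BB^*y,y\rangle=\lambda\langle B^*y,B^*y\rangle ,
\]
so the $\A$-valued ``lengths'' of $A^*y$ and $\sqrt{\lambda}\,B^*y$ coincide exactly. This single identity drives the whole argument: it simultaneously shows that $B^*y=B^*y'$ forces $A^*y=A^*y'$, which is what makes a map well defined, and that this map preserves the $\A$-valued inner product.

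First I would define $U_0\colon {\rm ran}(B^*)\to {\rm ran}(A^*)$ by $U_0(B^*y):=\lambda^{-1/2}A^*y$. The displayed identity makes $U_0$ well defined and inner-product preserving; $\A$-linearity is immediate from the module identities $B^*(ya)=B^*(y)a$ and $A^*(ya)=A^*(y)a$, and $U_0$ is surjective by construction. An inner-product preserving surjection is automatically isometric in norm and hence extends to a unitary $U\colon \overline{{\rm ran}(B^*)}\to\overline{{\rm ran}(A^*)}$ between the two closed submodules, with $U^{-1}=U^{*}$.

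Next I would promote $U$ to an operator on all of $\mathscr{E}$, and this is where both complementation hypotheses enter. Since $\overline{{\rm ran}(B^*)}$ is orthogonally complemented, the orthogonal projection $P$ of $\mathscr{E}$ onto it lies in $\CL(\mathscr{E})$; since $\overline{{\rm ran}(A^*)}$ is orthogonally complemented, the inclusion of $\overline{{\rm ran}(A^*)}$ into $\mathscr{E}$ is adjointable. Consequently $U$ extends to a partial isometry $V\in\CL(\mathscr{E})$ with initial projection $P$ onto $\overline{{\rm ran}(B^*)}$ and final projection onto $\overline{{\rm ran}(A^*)}$, so that $V^{*}V=P$. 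Because $P$ fixes ${\rm ran}(B^*)$, the construction yields $A^*=\sqrt{\lambda}\,VB^*$, and therefore also $V^{*}A^{*}=\sqrt{\lambda}\,V^{*}VB^{*}=\sqrt{\lambda}\,PB^{*}=\sqrt{\lambda}\,B^{*}$.

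Taking adjoints of these two identities gives $A=\sqrt{\lambda}\,BV^{*}$ and $B=\lambda^{-1/2}AV$, whence ${\rm ran}(A)\subseteq{\rm ran}(B)$ and ${\rm ran}(B)\subseteq{\rm ran}(A)$, i.e.\ ${\rm ran}(A)={\rm ran}(B)$, as claimed. (Alternatively, once $A=\sqrt{\lambda}\,BV^{*}$ has been obtained one inclusion is immediate, and the reverse follows by rerunning the entire argument with the roles of $A$ and $B$ interchanged and $\lambda$ replaced by $\lambda^{-1}$, using the symmetry of both the equality $AA^*=\lambda BB^*$ and the two hypotheses.) The only genuine obstacle I anticipate is the passage in the third paragraph from the submodule unitary $U$ to an \emph{adjointable} $V\in\CL(\mathscr{E})$: adjointability of maps between Hilbert $C^*$-modules can fail, and it is exactly the two orthogonal-complementation assumptions that repair it — the first supplying the projection $P$, the second making the target submodule complemented so that the inclusion, and hence $V$, is adjointable.
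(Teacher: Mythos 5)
Your argument is correct, and it is worth noting that the paper itself offers no proof of this lemma: it is quoted verbatim from \cite[Theorem 2.4]{MOU} (incidentally, the statement as printed has a typo, introducing $A,C\in\CL(\mathscr{E})$ but then using $B$ throughout). So there is nothing in the paper to compare against line by line; what you have produced is a self-contained proof of the cited result. Your route --- extract from the exact identity $\langle A^*y,A^*z\rangle=\langle AA^*y,z\rangle=\lambda\langle BB^*y,z\rangle=\lambda\langle B^*y,B^*z\rangle$ a well-defined, $\A$-inner-product preserving surjection ${\rm ran}(B^*)\to{\rm ran}(A^*)$, extend it to a unitary between the closures, and then use the two complementation hypotheses to compose with the projection onto $\overline{{\rm ran}(B^*)}$ and the (adjointable) inclusion of $\overline{{\rm ran}(A^*)}$ so as to land in $\CL(\mathscr{E})$ --- is sound, and the identities $A=\sqrt{\lambda}\,BV^*$ and $AV=\sqrt{\lambda}\,B$ do yield both inclusions at once, so the ``rerun with roles swapped'' fallback is not even needed. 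You have also correctly isolated where each hypothesis is used: adjointability of the extension is exactly what can fail for general submodules, and orthogonal complementation of $\overline{{\rm ran}(B^*)}$ supplies the initial projection while that of $\overline{{\rm ran}(A^*)}$ makes the final inclusion adjointable. This is essentially the mechanism behind the reference as well (there one can equivalently argue via ${\rm ran}(A)={\rm ran}\bigl((AA^*)^{1/2}\bigr)={\rm ran}\bigl((\lambda BB^*)^{1/2}\bigr)={\rm ran}(B)$, using the complementation hypotheses to guarantee the relevant polar-type factorizations); your construction of the intertwining partial isometry is an explicit and arguably more transparent version of the same idea. One cosmetic point: when you assert that $U_0$ preserves the inner product, state the off-diagonal identity $\langle A^*y,A^*z\rangle=\lambda\langle B^*y,B^*z\rangle$ explicitly rather than only the diagonal case $y=z$; it follows by the same one-line computation, with no need for polarization.
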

We are ready to prove an interpolation theorem. It is completely differs from another one for usual reproducing kernel Hilbert $C^*$-modules; see \cite[Theorem 2.7]{GMM}
\begin{theorem}\label{main5}
Let $\HH$ be a reproducing kernel Hilbert $C^*$-module over a $C^*$-algebra $\A$ of compact operators with kernel $K: S\times S\to \CL(\E)$, let $s_1,\ldots, s_n\in S$ be distinct, and let $y_1,\ldots, y_n\in \E$. There exists a function $f\in \HH$ with $f(s_i)=y_i,\,\,1\leq i\leq n$ if and only if $(y_1,\ldots, y_n)^t$ belongs to the range of operator matrix $\Big(K(s_j,s_i)\Big)$.
\end{theorem}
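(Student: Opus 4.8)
The plan is to translate the interpolation problem into a range condition for the sampling operator and then read that condition off the Gram operator matrix. I would first introduce the evaluation (sampling) operator $B\colon\HH\to\E^n$, $Bf=(f(s_1),\dots,f(s_n))^t=(\delta_{s_1}f,\dots,\delta_{s_n}f)^t$, which is adjointable with $B^*(x_1,\dots,x_n)^t=\sum_{i=1}^n\delta_{s_i}^*x_i=\sum_{i=1}^nK_{x_i,s_i}$ by \eqref{rk}. With this notation, the existence of some $f\in\HH$ with $f(s_i)=y_i$ for all $i$ is literally the assertion that $(y_1,\dots,y_n)^t\in\mathrm{ran}(B)$. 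Next, exactly as in the proof of Theorem \ref{main4} and using the Kolmogorov decomposition \eqref{delta}, I would compute that the $(i,j)$ entry of $BB^*$ is $\delta_{s_i}\delta_{s_j}^*=K(s_j,s_i)$, so that $BB^*$ is precisely the operator matrix $\big(K(s_j,s_i)\big)$. Thus the entire theorem reduces to the single identity $\mathrm{ran}(B)=\mathrm{ran}(BB^*)$.

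One inclusion needs no hypothesis on $\A$ and gives the ``if'' direction: if $(y_1,\dots,y_n)^t=BB^*z$ for some $z\in\E^n$, then $f:=B^*z$ lies in $\HH$ and $Bf=BB^*z=(y_1,\dots,y_n)^t$, so $f(s_i)=y_i$. Hence $\mathrm{ran}\big(K(s_j,s_i)\big)=\mathrm{ran}(BB^*)\subseteq\mathrm{ran}(B)$, and membership of $(y_1,\dots,y_n)^t$ in the range of the operator matrix already suffices for interpolation.

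The reverse inclusion $\mathrm{ran}(B)\subseteq\mathrm{ran}(BB^*)$ is where the assumption that $\A$ is a $C^*$-algebra of compact operators becomes indispensable, and it is the step I expect to be the main obstacle. By Lemma \ref{c1} every Hilbert $\A$-submodule is orthogonally complemented, so in particular $\overline{\mathrm{ran}(B^*)}$ and $\overline{\mathrm{ran}(BB^*)}$ are complemented and the Douglas-type criteria of Lemma \ref{fang} and Lemma \ref{c2} become available. Factoring $BB^*=(BB^*)^{1/2}\big((BB^*)^{1/2}\big)^*$ and applying Lemma \ref{c2} (in the form for operators into the common module $\E^n$) to the pair $B$ and $(BB^*)^{1/2}$, whose outer products both equal $BB^*$, already yields $\mathrm{ran}(B)=\mathrm{ran}\big((BB^*)^{1/2}\big)$. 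The genuinely delicate point is then to promote this to $\mathrm{ran}(B)=\mathrm{ran}(BB^*)$, equivalently to show $\mathrm{ran}(M)=\mathrm{ran}(M^{1/2})$ for the positive operator matrix $M=BB^*$. This is the crux: the naive Douglas factorization only controls $\mathrm{ran}(M^{1/2})$, so closing the gap to $\mathrm{ran}(M)$ must genuinely exploit the special geometry of Hilbert modules over a $C^*$-algebra of compact operators, namely the complementation of all the relevant submodules and the resulting polar-type decompositions. I would therefore concentrate the technical effort precisely on this range-matching step, combine it with the trivial inclusion from the previous paragraph, and so obtain the claimed equivalence.
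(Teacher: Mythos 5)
Your construction coincides with the paper's: the same multi-evaluation operator $B=\delta_F$, the same computation $BB^*=\big(K(s_j,s_i)\big)$, and the same appeal to Magajna's complementation result (Lemma \ref{c1}) followed by the Douglas-type range lemma (Lemma \ref{c2}) applied to the pair $B$ and $\big(BB^*\big)^{1/2}$, yielding $\mathrm{ran}(B)=\mathrm{ran}\big((BB^*)^{1/2}\big)$. Your ``if'' direction via $f:=B^*z$ is correct, is cleaner than anything in the paper (which does not treat the two directions separately), and indeed needs no hypothesis on $\A$.

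The step you flag as the crux --- upgrading $\mathrm{ran}(B)=\mathrm{ran}\big((BB^*)^{1/2}\big)$ to $\mathrm{ran}(B)=\mathrm{ran}(BB^*)$ --- is a genuine gap in your proposal, and you should know that the paper does not close it either: its proof literally ends with $\mathrm{ran}(\delta_F)=\mathrm{ran}\big(K(s_j,s_i)\big)^{1/2}$ ``as required,'' silently identifying the range of the Gram matrix with the range of its square root. That identification is not valid in this generality. For a positive $T\in\CL(\E)$ over a $C^*$-algebra of compact operators one can still have $\mathrm{ran}(T)\subsetneq\mathrm{ran}(T^{1/2})$: take $\A=\E=\mathbb{K}(\mathcal{H})$ and $T$ the left multiplication by a positive diagonal compact operator with non-closed range; then $T^{1/2}(x\otimes y)=(T^{1/2}x)\otimes y$ lies in $T\mathbb{K}(\mathcal{H})$ only when $x\in\mathrm{ran}(T^{1/2})$ as a Hilbert-space operator. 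So the ``only if'' direction of the statement as printed is not established by this method; what the method actually proves is the criterion with $\mathrm{ran}\big(K(s_j,s_i)\big)^{1/2}$ in place of $\mathrm{ran}\big(K(s_j,s_i)\big)$ (or the stated criterion under an extra hypothesis forcing $\mathrm{ran}(M)=\mathrm{ran}(M^{1/2})$, e.g.\ closedness of the range of the Gram matrix). In short: your proposal reproduces the paper's argument, and the obstacle you isolate is real --- it is a defect of the theorem's formulation rather than of your approach.
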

\begin{proof}
Let $F=\{s_1,\ldots, s_n\}$. Consider the multi-evaluation map $\delta_F: \HH\to \E^n$ defined by 
\[\delta_F(f)=(f(s_1), \ldots, f(s_n)),\quad f\in\HH.\]
It is easy to see that $\delta_F^*(x_1, \ldots, x_n)=\sum_{i=1}^nK_{x_i,s_i}$. Then
\begin{align*}
(\delta_F\delta_F^*)(x_1, \ldots, x_n)^t&=\delta_F\left(\sum_{i=1}^nK_{x_i,s_i}\right)=\left(\sum_{i=1}^nK_{x_i,s_i}(s_1), \ldots, \sum_{i=1}^nK_{x_i,s_i}(s_n)\right)\\
&=\Big(K(s_j,s_i)\Big)(x_1,\ldots, x_n)^t.
\end{align*}
Thus $\Big(K(s_j,s_i)\Big)^{1/2}\Big(K(s_j,s_i)\Big)^{1/2}=\delta_F\delta_F^*$. It follows from Lemma \ref{c1} that the closures of the submodules ${\rm ran}(\delta_F)$ and ${\rm ran}\Big(K(s_j,s_i)\Big)^{1/2}$ are orthogonally complemented, and so by Lemma \ref{c2}, 
${\rm ran}(\delta_F)={\rm ran}\Big(K(s_j,s_i)\Big)^{1/2}$ as required.
\end{proof}


\section{Relative reproducing kernel Hilbert $C^*$-modules}

We aim to show that conditionally negative definite kernels give reproducing kernel Hilbert
spaces, in general, and only in some special cases they give relative reproducing
kernel Hilbert spaces. We begin this section with following definition inspired by \cite{JOR}. 
\begin{definition}
A Hilbert $C^*$-module $\HH$ of $\E$-valued functions on a set $S$ is said to be a \emph{relative reproducing kernel Hilbert $C^*$-module} if there is a map $D: \E\times S\times S \to \HH,\,\, (x,s,t)\mapsto D_{x,s,t}$ such that 
\begin{eqnarray}\label{rrk}
\langle f(s)-f(t),x\rangle= \langle f, D_{x,s,t}\rangle,\qquad f\in\HH, x\in\E, s,t\in S.
\end{eqnarray}
We call $D$ the \emph{relative reproducing kernel}.
\end{definition}
\begin{remark} \label{iff} We would like to give some straightforward comments.
\begin{itemize}
\item[(i)] It follows from \eqref{rrk} that $f$ is in the orthogonal complement of the set $\{D_{x,s,t}: x\in\E, s,t\in S\}$ if and only if $f$ is a constant function.
\item[(ii)] Let $h_{x,s}:=D_{x,s,s_0}$ for some fixed element $s_0\in S$. For all $f\in \HH$, we have
\begin{align*}
\langle f, D_{x,s,t}\rangle &=\langle f(s)-f(t),x\rangle=\langle f(s)-f(s_0),x\rangle-\langle f(t)-f(s_0),x\rangle\\
& =\langle f, D_{x,s,s_0}\rangle -\langle f, D_{x,t,s_0}\rangle=\langle f, h_{x,s}-h_{x,t}\rangle,
\end{align*}
whence 
\begin{align}\label{mh}
D_{x,s,t}=h_{x,s}-h_{x,t}.
\end{align}
\end{itemize}
\end{remark}
A reproducing kernel Hilbert $C^*$-module $\HH$ is relative reproducing kernel since it is enough to set $D_{x,s,t}:=K_{x,s}-K_{x,t}$; compare it with \eqref{mh}. The converse is not true. For a counterexample in the context of scalar-valued relative reproducing kernel Hilbert spaces, see \cite[\S 4]{ALP}. However, as the following proposition shows, the difference is due to existence of a possibly unbounded linear map. To see this, compare \eqref{rrk} with \eqref{rk}.

The next immediate result is an analogous version of \cite[Theorem 2.5]{ALP} from the framework of Hilbert spaces to a more general setting.

\begin{proposition} \label{iff}
A Hilbert $C^*$-module $\HH$ of $\E$-valued functions on a set $S$ is a relative reproducing kernel Hilbert $C^*$-module if and only if there are a map $h: \E\times S \to \HH,\,\,(x,s)\mapsto h_{x,s}$ and a possibly unbounded $\A$-linear map $\varphi: \E\times \HH\to \A$ such that 
\[\langle f(s),x\rangle=\langle f,h_{x,s}\rangle +\varphi(x,f), \qquad f\in\HH, x\in\E, s\in S.\]
\end{proposition}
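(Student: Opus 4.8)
The plan is to prove both implications by exploiting the observation, already recorded in \eqref{mh}, that a relative reproducing kernel decomposes as a telescoping difference $D_{x,s,t}=h_{x,s}-h_{x,t}$ once a base point is fixed. The crucial structural feature is that the extra term $\varphi(x,f)$ in the claimed identity carries no dependence on the points $s,t\in S$, so it cancels whenever one passes to differences $f(s)-f(t)$; conversely, it is precisely the ``constant'' (in the point variable) discrepancy between the relative reproducing identity \eqref{rrk} and the genuine reproducing identity \eqref{rk}.

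For the forward implication, I would fix an arbitrary base point $s_0\in S$ and set $h_{x,s}:=D_{x,s,s_0}$ together with $\varphi(x,f):=\langle f(s_0),x\rangle$. Writing $\langle f(s),x\rangle=\langle f(s)-f(s_0),x\rangle+\langle f(s_0),x\rangle$ and applying \eqref{rrk} to the first summand yields exactly $\langle f(s),x\rangle=\langle f,h_{x,s}\rangle+\varphi(x,f)$. It then remains to verify the structural requirements on $\varphi$: for fixed $f$, the assignment $x\mapsto\langle f(s_0),x\rangle$ is the $\A$-module map $\widehat{f(s_0)}$, hence $\A$-linear in $x$; and the dependence on $f$ inherits additivity and the module relation $\varphi(x,f\cdot a)=a^*\varphi(x,f)$ directly from the inner product axioms. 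The qualifier \emph{possibly unbounded} is essential here: $\varphi(x,\cdot)$ is point-evaluation at $s_0$ paired against $x$, and in a relative reproducing kernel Hilbert $C^*$-module this need not be bounded, which is exactly what prevents $\HH$ from being a genuine reproducing kernel module.

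For the reverse implication, I would start from the assumed identity and subtract its two instances at $s$ and at $t$:
\begin{align*}
\langle f(s)-f(t),x\rangle=\big(\langle f,h_{x,s}\rangle+\varphi(x,f)\big)-\big(\langle f,h_{x,t}\rangle+\varphi(x,f)\big)=\langle f,h_{x,s}-h_{x,t}\rangle.
\end{align*}
Thus defining $D_{x,s,t}:=h_{x,s}-h_{x,t}$ realizes $\HH$ as a relative reproducing kernel Hilbert $C^*$-module, the $\varphi$-term dropping out precisely because it has no $s$- or $t$-dependence. This reproduces the relation \eqref{mh}, closing the loop with the forward direction.

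I do not expect a serious obstacle, since both directions reduce to the single algebraic identity $\langle f(s),x\rangle-\langle f(t),x\rangle=\langle f,h_{x,s}-h_{x,t}\rangle$. The only points demanding care are bookkeeping ones: confirming that $D$ (equivalently $h$) is additive and respects the right $\A$-action in the variable $x$, which follows from nondegeneracy of the $\A$-valued inner product (so that $\langle f,g_1\rangle=\langle f,g_2\rangle$ for all $f$ forces $g_1=g_2$), and checking the stated $\A$-linearity of $\varphi$. The conceptual content, rather than any difficulty, lies in isolating $\varphi$ as the obstruction---an $s$-independent, possibly unbounded term---to upgrading a relative reproducing kernel module to a reproducing kernel module.
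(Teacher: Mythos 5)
Your proof is correct and follows essentially the same route as the paper: fixing a base point $s_0$, setting $h_{x,s}:=D_{x,s,s_0}$ and $\varphi(x,f):=\langle f(s_0),x\rangle$ for the forward direction, and defining $D_{x,s,t}:=h_{x,s}-h_{x,t}$ so that the point-independent term $\varphi$ cancels for the converse. The additional checks you record (the $\A$-linearity of $\varphi$ and the role of possible unboundedness of evaluation at $s_0$) are consistent with the paper's conventions and only make explicit what the paper leaves implicit.
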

\begin{proof}
($\Longleftarrow$) It is enough to put $D_{x,s,t}:=h_{x,s}-h_{x,t}$ to reach \eqref{rrk}.\\
($\Longrightarrow$) Fix $s_0\in S$, and set $h_{x,s}:=D_{x,s,s_0}$. Then for all $f\in\HH$, $x\in\E$, and $s\in S$, we have $\langle f(s),x\rangle=\langle f,h_{x,s}\rangle +\varphi(x,f)$, 
where $\varphi: \E\times \HH\to \A$ is defined by $\varphi(x,f)=\langle f(s_0),x\rangle$. 
\end{proof}
\begin{corollary}
Let $\HH$ be a relative reproducing kernel Hilbert $C^*$-module of functions $f: S\to\E$. If there is some $s_0\in S$ such that $f(s_0)=0$ for all $f\in\HH$, then $\HH$ includes a reproducing kernel Hilbert $C^*$-submodule.
\end{corollary}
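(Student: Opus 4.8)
The plan is to deduce the result directly from Proposition \ref{iff} by exploiting the hypothesis $f(s_0)=0$ to annihilate the correction term $\varphi$, thereby upgrading the relative reproducing identity to a genuine reproducing identity, and then to realize the closed $\A$-linear span of the associated functions $h_{x,s}$ as a reproducing kernel Hilbert $C^*$-module via Theorem \ref{main1}.

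First I would invoke Proposition \ref{iff}: there exist a map $(x,s)\mapsto h_{x,s}\in\HH$ and a possibly unbounded $\A$-linear map $\varphi$ with $\langle f(s),x\rangle=\langle f,h_{x,s}\rangle+\varphi(x,f)$ for all $f\in\HH$, $x\in\E$, $s\in S$. Inspecting its construction, $\varphi(x,f)=\langle f(s_0),x\rangle$, so the hypothesis that $f(s_0)=0$ for every $f\in\HH$ forces $\varphi\equiv 0$, leaving
\[\langle f(s),x\rangle=\langle f,h_{x,s}\rangle,\qquad f\in\HH,\ x\in\E,\ s\in S,\]
which is exactly the reproducing identity \eqref{rk}. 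I would also record that $x\mapsto h_{x,s}$ is $\A$-linear (and $\CC$-linear), a consequence of the defining property \eqref{rrk} of $D_{x,s,t}$, the relation \eqref{mh}, and non-degeneracy of the inner product. Let $\HH_0$ be the closed $\A$-linear span of $\{h_{x,s}:x\in\E,\ s\in S\}$ in $\HH$; it is a Hilbert $\A$-submodule on which the reproducing identity continues to hold.

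Next I would manufacture the kernel. Setting $K(s,t)(x):=h_{x,s}(t)$ and applying the reproducing identity to $f=h_{x,s}$ gives $\langle K(s,t)(x),y\rangle=\langle h_{x,s},h_{y,t}\rangle$ for all $x,y\in\E$; conjugating yields $\langle K(s,t)(x),y\rangle=\langle x,K(t,s)(y)\rangle$. Thus the everywhere-defined, $\A$-linear map $K(s,t):\E\to\E$ admits the everywhere-defined $K(t,s)$ as a formal adjoint, and since any map between Hilbert $C^*$-modules that possesses an adjoint is automatically bounded and adjointable (the adjoint relation forces a closed graph), we obtain $K(s,t)\in\CL(\E)$ and a hermitian kernel $K:S\times S\to\CL(\E)$. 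This automatic-boundedness step is the delicate point, and it is precisely where the vanishing of $\varphi$ is essential, since otherwise no honest adjoint relation is available. Positive definiteness is then immediate:
\[\sum_{i,j=1}^n\langle K(s_i,s_j)(x_i),x_j\rangle=\sum_{i,j=1}^n\langle h_{x_i,s_i},h_{x_j,s_j}\rangle=\Big\langle\sum_{i=1}^n h_{x_i,s_i},\sum_{j=1}^n h_{x_j,s_j}\Big\rangle\geq 0.\]

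Finally I would verify that $\HH_0$ meets all requirements of Theorem \ref{main1}(ii) with $K_{x,s}=h_{x,s}$: the functions $K_{x,s}$ lie in $\HH_0$ and their $\A$-linear span is dense by construction; the identity $K_{x,s}(t)=K(s,t)(x)$ holds by definition of $K$; the inner product satisfies $\langle K_{x,s},K_{y,t}\rangle=\langle K(s,t)(x),y\rangle$; and the reproducing identity $\langle f(s),x\rangle=\langle f,K_{x,s}\rangle$ holds on $\HH_0$. Hence $\HH_0$ is the reproducing kernel Hilbert $C^*$-module associated with the positive definite kernel $K$ and is a submodule of $\HH$, which is the asserted conclusion. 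I expect the main obstacle to be the verification that each $K(s,t)$ is genuinely adjointable and bounded rather than merely $\A$-linear; the remaining work is simply a matter of matching the axioms of Theorem \ref{main1}.
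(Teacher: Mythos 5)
Your proposal is correct and follows essentially the same route as the paper: use the hypothesis $f(s_0)=0$ to kill the term $\varphi(x,f)=\langle f(s_0),x\rangle$ from Proposition \ref{iff}, take $\HH_0$ to be the closed $\A$-linear span of the $h_{x,s}$, define $K(s,t)(x)=h_{x,s}(t)$, and verify positive definiteness via $\sum_{i,j}\langle K(s_i,s_j)x_i,x_j\rangle=\langle\sum_i h_{x_i,s_i},\sum_j h_{x_j,s_j}\rangle\geq 0$. Your extra care about the adjointability and boundedness of each $K(s,t)$ is a detail the paper leaves implicit, but it does not change the argument.
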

\begin{proof}
With the notation of Proposition \ref{iff}, we have
\[\langle f(s),x\rangle=\langle f,h_{x,s}\rangle,\quad f\in \HH, s\in S, x\in \E.\]
Let $\HH_0$ be the closed $\A$-linear span of $\{h_{x,s}: s\in S, x\in\E\}$ (see Remark \ref{sum}(i)). Employing Theorem \ref{main1}, we derive that $\HH_0$ is a reproducing kernel Hilbert $C^*$-module with the positive definite kernel $K(s,t)(x)=h_{x,s}(t)$. Note that
\[\sum_{i,j=1}^n\langle K(s_i,s_j)(x_i),x_j\rangle=\sum_{i,q=1}^n\langle h_{x_i,s_i}(s_j),x_j\rangle=\sum_{i,j=1}^n\langle h_{x_i,s_i},h_{x_j,s_j}\rangle\geq 0.\]
\end{proof}
The next theorem provides some mild conditions under which a relative reproducing kernel Hilbert $C^*$-module is reproducing kernel; cf. \cite[Corollary 2.5]{ALP}.

If $\A$ is a $W^*$-algebra, then the $\A$-valued inner product $\langle\cdot,\cdot\rangle$ on a Hilbert $\A$-module $\HH$ can be extended to an $\A$-valued inner product on $\E'$ in such a way as to make $\HH'$ into a self-dual Hilbert $\A$-module. In particular, the extended inner product satisfies $\langle \tau,\widehat{x}\rangle=\tau (x)$ for all $ x\in \HH$ and $\tau\in \HH'$; see \cite[Theorem 3.2]{Pa}. The next result reads as follows. Evidently, the proof works if we consider an arbitrary $C^*$-algebra $\A$ and a self-dual Hilbert $\A$-module $\HH=\HH'$.

\begin{theorem}
Let $\HH$ be a Hilbert $W^*$-module over a $W^*$-algebra $\A$ and let $\HH'$ be a relative reproducing kernel Hilbert $C^*$-module on a set $S$ with a relative reproducing kernel $D$. If there is $s_0\in S$ such that for each $x\in \E$, the $\A$-linear map $f\mapsto \langle x,f(s_0)\rangle$ on $\HH'$ is continuous, then $\HH'$ is a reproducing kernel Hilbert $C^*$-module with a kernel $K$. Moreover, $D_{x,s,t}=K_{x,s}-K_{x,t}$, where $K_{x,t}$ is given as in Theorem \ref{main1}.
\end{theorem}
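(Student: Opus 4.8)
The plan is to convert the relative reproducing identity into a genuine reproducing identity by absorbing the evaluation-at-$s_0$ term into the kernel, using self-duality to realize that term as pairing against a fixed vector. First I would invoke Proposition \ref{iff} with the given distinguished point $s_0$, which supplies $h_{x,s}:=D_{x,s,s_0}$ and the concrete functional $\varphi(x,f)=\langle f(s_0),x\rangle$, so that
\[
\langle f(s),x\rangle=\langle f,h_{x,s}\rangle+\langle f(s_0),x\rangle,\qquad f\in\HH',\ x\in\E,\ s\in S.
\]
The entire discrepancy between a relative and an honest reproducing kernel Hilbert $C^*$-module is thus concentrated in $\varphi(x,\cdot)$, and the continuity hypothesis is precisely what lets me represent it.

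Next, for fixed $x$ consider $\tau_x\colon\HH'\to\A$, $\tau_x(f)=\langle x,f(s_0)\rangle$. It is $\A$-linear, since $\tau_x(f\cdot a)=\langle x,f(s_0)a\rangle=\tau_x(f)a$, and bounded by assumption. Because $\HH'$ is self-dual (it is the dual of a Hilbert module over a $W^*$-algebra, by \cite[Theorem 3.2]{Pa}, and in any case we assume $\HH=\HH'$ self-dual), there is $g_x\in\HH'$ with $\tau_x(f)=\langle g_x,f\rangle$; taking adjoints yields $\langle f(s_0),x\rangle=\langle f,g_x\rangle$, i.e. $\varphi(x,f)=\langle f,g_x\rangle$. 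Setting $K_{x,s}:=h_{x,s}+g_x$ then gives the reproducing identity $\langle f(s),x\rangle=\langle f,K_{x,s}\rangle$ on all of $\HH'$, which is exactly \eqref{rk}. A routine Banach--Steinhaus argument shows $x\mapsto K_{x,s}$ is bounded, so I may define $K(s,t)\in\CL(\E)$ by $K(s,t)(x):=K_{x,s}(t)$; the reproducing identity gives $\langle K(s,t)x,y\rangle=\langle K_{x,s},K_{y,t}\rangle$, whence $K(s,t)^*=K(t,s)$ and
\[
\sum_{i,j=1}^n\langle K(s_i,s_j)x_i,x_j\rangle=\Big\langle \sum_{i=1}^n K_{x_i,s_i},\ \sum_{j=1}^n K_{x_j,s_j}\Big\rangle\geq 0,
\]
so $K$ is positive definite. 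With the reproducing identity and positive definiteness in place, Theorem \ref{main1} identifies $\HH'$ as the reproducing kernel Hilbert $C^*$-module of $K$, and the ``moreover'' clause follows at once from \eqref{mh}: $D_{x,s,t}=h_{x,s}-h_{x,t}=(K_{x,s}-g_x)-(K_{x,t}-g_x)=K_{x,s}-K_{x,t}$.

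The step I expect to be the main obstacle is the density requirement in Theorem \ref{main1}(ii)(1), namely that the $\A$-linear span $N$ of $\{K_{x,s}\}$ should fill up $\HH'$. The reproducing identity gives cleanly that $N^{\perp}=\{f:f(s)=0\text{ for all }s\}=\{0\}$, but in a Hilbert $C^*$-module a trivial orthogonal complement does \emph{not} force norm density (witness $\mathbb{K}(\mathcal H)\subsetneq\mathbb{B}(\mathcal H)$ as right modules). This is exactly where self-duality is indispensable: for a self-dual Hilbert $\A$-module over a $W^*$-algebra one has $N^{\perp\perp}=\HH'$, so $N$ is dense in the topology induced by the $\A$-valued inner product, which is the operative notion of density in the $W^*$ setting. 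I would therefore read the conclusion with respect to that topology (equivalently, replace the norm closure in Theorem \ref{main1} by the self-dual closure) and emphasize that the genuinely essential output, the reproducing identity \eqref{rk} valid on all of $\HH'$, has already been secured.
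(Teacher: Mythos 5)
Your proposal is correct and follows essentially the same route as the paper: self-duality of $\HH'$ (via \cite[Theorem 3.2]{Pa}) furnishes the element $g_x$ — the paper calls it $\psi_x$ — representing the bounded $\A$-linear functional $f\mapsto\langle x,f(s_0)\rangle$, one sets $K_{x,s}:=h_{x,s}+g_x$ to restore the reproducing identity $\langle f(s),x\rangle=\langle f,K_{x,s}\rangle$, and the ``moreover'' clause follows from \eqref{mh} exactly as you say. Your closing observation — that the reproducing identity only yields triviality of the orthogonal complement of the $\A$-span of the $K_{x,s}$, which in a Hilbert $C^*$-module does not give norm density as required by Theorem \ref{main1}(ii)(1), so the closure must be taken in the self-dual sense — is a genuine subtlety that the paper's own proof passes over in silence, and your resolution is the appropriate one.
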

\begin{proof}
It follows from the self-duality of $\HH'$ that for each $x\in X$ there exists a unique element $\psi_{x}\in \HH'$ such that $\langle \psi_{x},f\rangle=\langle x,f(s_0)\rangle$. Consider the kernel $K:S\times S\to \CL(\E)$ defined by 
\[K(s,t)(x)=h_{x,s}(t)+\psi_{x}(t),\]
where $h_{x,s}:=D_{x,s,s_0}$. Then $K_{x,s}(t):=K(s,t)(x)=h_{x,s}(t)+\psi_{x}(t)$. Hence, $K_{x,s}=h_{x,s}+\psi_{x}\in \HH'$.
Furthermore,
\begin{align*}
\langle f,K_{x,s}\rangle &=\langle f,h_{x,s}+\psi_{x}\rangle\\
& =\langle f(s)-f(s_0),x\rangle +\langle f, \psi_{x}\rangle\\
&=\langle f(s),x\rangle-\langle f(s_0),x\rangle+\langle f(s_0),x\rangle\\
&=\langle f(s),x\rangle
\end{align*}
for all $f\in\HH'$, $x\in \E$, and $s\in S$. Now, the part (ii) of Theorem \ref{main1} entails that $\HH'$ is a reproducing kernel Hilbert $C^*$-module.
Furthermore, it follows from \eqref{mh} that
\[D_{x,s,t}=h_{x,s}-h_{x,t}=(h_{x,s}+\psi_x)-(h_{x,t}-\psi_x)=K_{x,s}-K_{x,t}.\]
\end{proof}

The following theorem gives us a relation between conditionally negative definite kernels and reproducing kernel Hilbert $C^*$-modules. We follow an approach, more suitable for our investigation. 

\begin{theorem}\label{main2}
Let $L$ be a conditionally negative definite kernel on a set $S$ into $\CL(\E)$, let $s_0\in S$, and let $K(s,t)$ be defined as in \eqref{lk}.
Then $K$ is a positive definite kernel and there are a reproducing kernel Hilbert $C^*$-module $\HH$ consisting of functions from $S$ into $\E$ and a map $\psi: S\to \CL(\E)$ such that
\begin{align}\label{lll}
L(s,t)=K(s,s)+K(t,t)-2K(s,t)+\psi(s)+\psi(t)^*.
\end{align}
In particular, $\psi$ is identically zero if and only if $L$ is normalized and $L(s,s_0)$ is self-adjoint.
\end{theorem}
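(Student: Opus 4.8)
The first assertion costs nothing: by Theorem~\ref{th1} the kernel $K$ defined in \eqref{lk} is positive definite, and feeding this $K$ into Theorem~\ref{main1} hands us a reproducing kernel Hilbert $C^*$-module $\HH$ of $\E$-valued functions on $S$. Thus the genuine content is to exhibit the map $\psi$ and verify \eqref{lll}; there is no further structure to build.

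The plan is to read $\psi$ off the algebra and then confirm the identity by a direct expansion. I would set
\[
\psi(s) := \frac{1}{2}\bigl[L(s,s) + L(s,s_0) - L(s_0,s)\bigr],
\]
which lies in $\CL(\E)$ because $\CL(\E)$ is a $C^*$-algebra closed under adjoints and linear combinations. To check \eqref{lll} I would substitute the definition \eqref{lk} into $K(s,s)+K(t,t)-2K(s,t)$ and collect terms; the four copies of $L(s_0,s_0)$ cancel, and what survives is
\begin{align*}
K(s,s)+K(t,t)-2K(s,t) = {}& L(s,t) - \frac{1}{2}\bigl[L(s,s)+L(s,s_0)-L(s_0,s)\bigr] \\
& - \frac{1}{2}\bigl[L(t,t)-L(t,s_0)+L(s_0,t)\bigr].
\end{align*}
The first bracket is exactly $2\psi(s)$. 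For the second, the hermitian property $L(a,b)=L(b,a)^*$ yields $L(t,t)+L(s_0,t)-L(t,s_0) = 2\psi(t)^*$, so the second bracket equals $2\psi(t)^*$. Rearranging produces \eqref{lll}.

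For the final clause, I would rewrite $\psi$ using hermiticity as
\[
\psi(s) = \frac{1}{2}\Bigl[L(s,s) + \bigl(L(s,s_0)-L(s,s_0)^*\bigr)\Bigr],
\]
observing that $L(s,s)$ is self-adjoint (the kernel is hermitian) while $L(s,s_0)-L(s,s_0)^*$ is skew-adjoint. Since an element of a $C^*$-algebra that decomposes as a self-adjoint summand plus a skew-adjoint summand is zero precisely when both summands vanish, the condition $\psi\equiv 0$ is equivalent to $L(s,s)=0$ for every $s$ (that is, $L$ is normalized) together with $L(s,s_0)=L(s,s_0)^*$ for every $s$ (that is, each $L(s,s_0)$ is self-adjoint); the converse direction is an immediate substitution into the formula for $\psi$.

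The only real obstacle is the sign bookkeeping in the expansion of $K(s,s)+K(t,t)-2K(s,t)$; conceptually nothing is at stake, since positive definiteness and the module $\HH$ are delivered by Theorems~\ref{th1} and~\ref{main1}, and the clean splitting of the remainder into $s$-dependent and $t$-dependent parts is forced by the symmetric roles of the two variables under the adjoint.
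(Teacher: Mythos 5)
Your proof is correct and takes essentially the same route as the paper: the same reduction to Theorems~\ref{th1} and~\ref{main1}, and the same map $\psi$, since your $\tfrac{1}{2}\left[L(s,s)+L(s,s_0)-L(s_0,s)\right]$ coincides with the paper's $\tfrac{1}{2}L(s,s)+i\,\mathrm{Im}\,L(s,s_0)$ by hermiticity of $L$. The only difference is cosmetic: you verify \eqref{lll} directly as an operator identity, whereas the paper computes the quadratic forms $\langle \cdot\, x,x\rangle$ with real and imaginary parts and then upgrades to the operator identity via \eqref{pos}.
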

\begin{proof}
Theorem \ref{th1} ensures that 
\[K(s,t):=\frac{1}{2}\left[L(s,s_0) + L(s_0,t)-L(s,t)-L(s_0,s_0)\right]\] is a positive definite kernel. Let $\HH$ be the reproducing kernel Hilbert $C^*$-module constructed in Theorem \ref{main1}(ii). Using the notation of Theorem \ref{main1} and the fact that $L(s_0,s_0)$ is self-adjoint, we have
\begin{align}\label{sch}
\langle K(s,s)x,x\rangle &+\langle K(t,t)x,x\rangle-2 \langle K(s,t)x,x\rangle\nonumber\\
&=\langle K(s,s)x,x\rangle +\langle K(t,t)x,x\rangle-2 {\rm Re}\langle K(s,t)x,x\rangle-2i{\rm Im}\langle K(s,t)x,x\rangle\nonumber\\
&=\frac{1}{2}\langle[L(s,s_0)+L(s_0,s)-L(s,s)-L(s_0,s_0)]x,x\rangle\nonumber\\
&\quad+ \frac{1}{2}\langle[L(t,s_0)+L(s_0,t)-L(t,t)-L(s_0,s_0)]x,x\rangle\nonumber\\
&\quad+ \frac{1}{2}\langle[-L(s,s_0)-L(s_0,t)+L(s,t)+L(s_0,s_0)\nonumber\\
&\quad -L(t,s_0) - L(s_0,s)+L(t,s)+L(s_0,s_0)]x,x\rangle\nonumber\\
&\quad-i{\rm Im}\langle[L(s,s_0)+L(s_0,t)-L(s,t)-L(s_0,s_0)]x,x\rangle\nonumber\\
&={\rm Re} \langle L(s,t)x,x\rangle-\frac{1}{2}[\langle L(s,s)x,x\rangle+\langle L(t,t)x,x\rangle]\nonumber\\
&\quad + {\rm Im} \langle L(s,t)x,x\rangle-i{\rm Im}\langle[L(s,s_0)+L(s_0,t)-L(s_0,s_0)]x,x\rangle\nonumber\\
&= \langle L(s,t)x,x\rangle-\frac{1}{2}[\langle L(s,s)x,x\rangle+\langle L(t,t)x,x\rangle]\nonumber\\
&\quad-i{\rm Im}\langle[L(s,s_0)+L(s_0,t)]x,x\rangle\nonumber\\
&= \langle L(s,t)x,x\rangle-\langle \psi(s)x,x\rangle-\langle \psi(t)^*x,x\rangle],
\end{align}
where
\begin{align}\label{psi}
\psi(s)=\frac{1}{2} L(s,s)+i{\rm Im} L(s,s_0). 
\end{align}
Thus we get \eqref{lll} by utilizing \eqref{pos}. The last assertion is concluded from \eqref{psi} and the fact that $L$ is hermitian.
\end{proof}
\begin{corollary}
Let $L$ be a conditionally negative definite kernel on a set $S$ into the self-adjoint part of $\CL(\E)$. Then the kernel $\langle L(s,t)x,x\rangle$ is conditionally negative definite with values in $\A$ for each $x\in\E$.
\end{corollary}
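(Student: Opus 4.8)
The plan is to unwind the definition of conditional negative definiteness for the $\A$-valued kernel $M_x(s,t):=\langle L(s,t)x,x\rangle$ and to reduce it directly to the hypothesis that $L$ is conditionally negative definite. By the correspondence between $\A$-valued kernels and matrices recorded just after \eqref{pdk2} (identifying $\CL(\A)$ with $\A$), it suffices to show that for every $n\geq 2$, all $s_1,\dots,s_n\in S$, and all $a_1,\dots,a_n\in\A$ with $\sum_{i=1}^n a_i=0$, one has $\sum_{i,j=1}^n a_i^*\langle L(s_i,s_j)x,x\rangle a_j\leq 0$, together with the hermitian property $M_x(t,s)=M_x(s,t)^*$. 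The latter is immediate from $L$ being hermitian: indeed $\langle L(t,s)x,x\rangle=\langle L(s,t)^*x,x\rangle=\langle x,L(s,t)x\rangle=\langle L(s,t)x,x\rangle^*$, so $M_x$ is a hermitian kernel.

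The key idea is to feed the conditional negative definiteness of $L$ with test vectors manufactured from $x$ and the $a_i$ through the right $\A$-module structure of $\E$. First I would fix $x\in\E$ and, given $a_1,\dots,a_n\in\A$ with $\sum_i a_i=0$, set $x_i:=x a_i\in\E$. Then $\sum_{i=1}^n x_i=x\bigl(\sum_{i=1}^n a_i\bigr)=0$, so the family $x_1,\dots,x_n$ is admissible in the defining inequality \eqref{pdk2} for $L$.

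Next I would rewrite the operator-valued sum in terms of $M_x$. Every adjointable operator is automatically an $\A$-module map, so $L(s_i,s_j)(x a_i)=\bigl(L(s_i,s_j)x\bigr)a_i$; combining this with the identity $\langle y a_i,z a_j\rangle=a_i^*\langle y,z\rangle a_j$, which follows from properties (iii)--(iv) of the inner product, I obtain $\langle L(s_i,s_j)x_i,x_j\rangle=a_i^*\langle L(s_i,s_j)x,x\rangle a_j$ for each pair $i,j$. Summing over $i,j$ therefore gives $\sum_{i,j}a_i^*\langle L(s_i,s_j)x,x\rangle a_j=\sum_{i,j}\langle L(s_i,s_j)x_i,x_j\rangle\leq 0$, where the final inequality is exactly \eqref{pdk2} applied to $L$ with the admissible vectors $(x_i)$. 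This establishes the required matrix inequality for every choice of points, and hence the conditional negative definiteness of $M_x$.

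This argument is in essence a single substitution, so there is no serious obstacle; the points that demand care are purely of a bookkeeping nature. I would verify explicitly that adjointable operators commute with the right action of $\A$ (a standard consequence of $\langle T(xa),y\rangle=a^*\langle Tx,y\rangle=\langle (Tx)a,y\rangle$) and that the admissibility condition $\sum_i x_i=0$ is preserved under $x_i=x a_i$, which holds precisely because $\sum_i a_i=0$. The standing assumption that $L$ take values in the self-adjoint part of $\CL(\E)$ is used only to ensure that $M_x$ is a hermitian (indeed self-adjoint-valued and symmetric) kernel, matching the hypothesis built into the definition of a conditionally negative definite kernel.
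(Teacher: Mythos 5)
Your proof is correct, but it takes a genuinely different route from the paper's. The paper deduces the corollary from Theorem \ref{main2}: it invokes the decomposition \eqref{lll}, rewrites $\langle L(s,t)x,x\rangle$ as $|K_{x,s}-K_{x,t}|^2+\bigl(\langle\psi(s)x,x\rangle+\overline{\langle\psi(t)x,x\rangle}\bigr)$ inside the reproducing kernel module $\HH$, and then appeals to Examples \ref{ex1}(iii) and (iv) together with the fact that a sum of conditionally negative definite kernels is conditionally negative definite. You instead verify \eqref{pdk2} for $M_x(s,t)=\langle L(s,t)x,x\rangle$ directly by the substitution $x_i=xa_i$, using that adjointable operators are $\A$-module maps and that $\langle ya_i,za_j\rangle=a_i^*\langle y,z\rangle a_j$; this is a one-step reduction requiring none of the machinery of Section 4. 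Two remarks on what each approach buys. Your argument is shorter and, as you essentially observe, does not actually use the hypothesis that $L$ takes values in the self-adjoint part of $\CL(\E)$: the hermitian property of $M_x$ already follows from $L$ being a hermitian kernel (which is built into the definition), so your proof establishes the statement in greater generality. The paper's route, by contrast, needs pointwise self-adjointness of $K(s,t)$ to put $\langle L(s,t)x,x\rangle$ into the $|K_{x,s}-K_{x,t}|^2$ form of Example \ref{ex1}(iv), but in exchange it exhibits the conditionally negative definite kernel $\langle L(s,t)x,x\rangle$ concretely as a squared distance in $\HH$ plus a term of the form $c_s+c_t^*$, which is the structural information (in the spirit of Schoenberg's lemma) that the surrounding results are after. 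Both proofs are valid; yours is the more elementary and more general verification, the paper's is the more structural one.
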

\begin{proof}
The operator $K(s,t):=\frac{1}{2}\left[L(s,s_0) + L(s_0,t)-L(s,t)-L(s_0,s_0)\right]$ is self-adjoint for each $s,t\in S$. It follows from \eqref{lll} that
\begin{align*}
\langle L(s,t)x,x\rangle &=\langle K(s,s)x,x\rangle+\langle K(t,t)x,x\rangle-\langle K(s,t)x,x\rangle-\langle K(t,s)x,x\rangle\\
&\quad +\langle \psi(s)x,x\rangle+\langle \psi(t)^*x,x\rangle\\
&=|K_{x,s}-K_{x,t}|^2 +\left(\psi(s)x,x\rangle +\overline{\langle \psi(t)x,x\rangle}\right) .
\end{align*}
From Examples \ref{ex1}(iii \& iv), we get the assertion since clearly the sum of two conditionally negative definite kernels is a conditionally negative definite one.
\end{proof}

The next result presents Schoenberg’s lemma \cite{SCH} in our setting.

\begin{corollary} 
Let $L$ be a normalized symmetric conditionally negative definite kernel on a set $S$ into $\CL(\E)$. Then there exist a reproducing kernel Hilbert $C^*$-module $\HH$ consisting of functions from $S$ into $\E$ and a map $\theta: \E\times S\to \HH$ such that
\begin{align*}
\langle L(s,t)x,x\rangle=|\theta(x,s)-\theta(x,t)|^2,\quad x\in\E, s\in S.
\end{align*}
\end{corollary}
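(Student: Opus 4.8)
The plan is to reduce everything to Theorem \ref{main2} and then recognize the right-hand side as a difference of squares in $\HH$. The only genuine point is to check that the hypotheses force the map $\psi$ of Theorem \ref{main2} to vanish. Indeed, being conditionally negative definite, $L$ is hermitian, so $L(s_0,s)=L(s,s_0)^*$; combining this with the symmetry hypothesis $L(s_0,s)=L(s,s_0)$ gives $L(s,s_0)=L(s,s_0)^*$, that is, $L(s,s_0)$ is self-adjoint for every $s\in S$. Since $L$ is moreover normalized, the last assertion of Theorem \ref{main2} yields $\psi\equiv 0$.

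Next I would apply Theorem \ref{main2} itself. Fixing $s_0\in S$ and setting $K$ as in \eqref{lk}, the kernel $K$ is positive definite, and there is a reproducing kernel Hilbert $C^*$-module $\HH$ (the one built in Theorem \ref{main1}(ii)) for which \eqref{lll} holds. With $\psi\equiv 0$ this collapses to
\[
L(s,t)=K(s,s)+K(t,t)-2K(s,t).
\]
I would also record that $K$ inherits the symmetry of $L$: substituting $L(s,t)=L(t,s)$ (and the corresponding identities for the mixed terms) into \eqref{lk} shows $K(s,t)=K(t,s)$ for all $s,t\in S$.

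Finally, I would define the candidate map by $\theta(x,s):=K_{x,s}\in\HH$, where $K_{x,s}$ is as in Theorem \ref{main1}(ii), and evaluate the right-hand side of the claimed identity using the inner product formula \eqref{kk}. Expanding the $\A$-valued square norm,
\[
|K_{x,s}-K_{x,t}|^2=\langle K(s,s)x,x\rangle-\langle K(s,t)x,x\rangle-\langle K(t,s)x,x\rangle+\langle K(t,t)x,x\rangle,
\]
and the symmetry $K(t,s)=K(s,t)$ merges the two middle terms, so this equals $\langle\bigl(K(s,s)+K(t,t)-2K(s,t)\bigr)x,x\rangle=\langle L(s,t)x,x\rangle$. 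Thus $\langle L(s,t)x,x\rangle=|\theta(x,s)-\theta(x,t)|^2$, as required.

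The individual computations are routine applications of the reproducing property; the main obstacle is the first step, namely recognizing that symmetry and hermiticity \emph{together} (neither alone suffices) are exactly what is needed to make $L(s,s_0)$ self-adjoint, which in turn forces $\psi\equiv 0$. Once $\psi$ is eliminated, the difference-of-squares identity follows immediately from \eqref{kk} and the symmetry of $K$.
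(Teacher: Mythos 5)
Your proof is correct and follows essentially the same route as the paper: define $\theta(x,s)=K_{x,s}$, use $\psi\equiv 0$, and expand $|K_{x,s}-K_{x,t}|^2$ via \eqref{kk} and the symmetry of $K$. The one place you add value is in spelling out why $\psi$ vanishes (hermitian $+$ symmetric forces $L(s,s_0)$ to be self-adjoint, so the last assertion of Theorem \ref{main2} applies), a step the paper's proof leaves implicit.
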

\begin{proof}
We use the notation of Theorems \ref{main1} and \ref{lll}. First note that $K$ is symmetric since so is $L$. Set $\theta(x,s):=K_{x,s}$ for $x\in\E$ and $s\in S$. Then, equality \eqref{sch} together with $\psi=0$ yield that
\begin{align*}
|\theta(x,s)-\theta(x,t)|^2&=\langle K_{x,s}-K_{x,t}, K_{x,s}-K_{x,t}\rangle\\
&=\langle K(s,s)x,x\rangle +\langle K(t,t)x,x\rangle-2 \langle K(s,t)x,x\rangle\\
&= \langle L(s,t)x,x\rangle.
\end{align*}
\end{proof}

\medskip

\noindent \textit{Conflict of Interest Statement.} The author states that there is no conflict of interest.\\

\noindent\textit{Data Availability Statement.} Data sharing not applicable to this article as no datasets were generated or analysed during the current study.

\medskip



\begin{thebibliography}{99}

\bibitem{ALP} D. Alpay, P. Jorgensen, and D. Volok, \textit{Relative reproducing kernel Hilbert spaces}, Proc. Amer. Math. Soc. \textbf{142} (2014), no. 11, 3889--3895. 

\bibitem{ACM} M. Amyari, M. Chakoshi, and M. S. Moslehian, \textit{Quasi-representations of Finsler modules over $C^*$-algebras}, J. Operator Theory \textbf{70} (2013), no. 1, 181--190.

\bibitem{ARA} Lj. Aramba\v{s}i\'{c}, D. Baki\'{c}, and M. S. Moslehian, \textit{A treatment of the Cauchy-Schwarz inequality in $C^*$-modules}, J. Math. Anal. Appl. \textbf{381} (2011), no. 2, 546--556.

\bibitem{Aron} N. Aronszajn, \textit{Theory of reproducing kernels}, Trans. Amer. Math. Soc. \textbf{68} (1950), 337--404.

\bibitem{ARV} W. Arveson, \textit{An invitation to $C\sp*$-algebras}, Graduate Texts in Mathematics, No. 39. Springer-Verlag, New York-Heidelberg, 1976. 

\bibitem{BAA} J. A. Ball, G. Marx, and V. Vinnikov, \textit{Noncommutative reproducing kernel Hilbert spaces}, J. Funct. Anal. 271 (2016), no. 7, 1844--1920. 

\bibitem{BBLS} S. D. Barreto, B. V. R. Bhat, V. Liebscher, and M. Skeide, \textit{Type I product systems of Hilbert modules}, J. Funct. Anal. \textbf{212} (2004), no. 1, 121--181.

\bibitem{BER} C. Berg, J. P. R. Christensen and P. Ressel, \textit{Harmonic analysis on semigroups: Theory of positive definite and related functions}, Graduate Texts in Mathematics, 100. Springer-Verlag, New York, 1984.

\bibitem{CAR} C. Carmeli, E. De Vito, and A. Toigo, \textit{Vector valued reproducing kernel Hilbert spaces of integrable functions and Mercer theorem}, Anal. Appl. (Singap.) \textbf{4} (2006), no. 4, 377--408.

\bibitem{FAN} X. Fang, M. S. Moslehian, and Q. Xu, \textit{On majorization and range inclusion of operators on Hilbert $C^*$-modules}, Linear Multilinear Algebra \textbf{66} (2018), no. 12, 2493--2500.

\bibitem{FRA} M. Frank, \textit{Self-duality and $C^*$-reflexivity of Hilbert $C^*$-moduli}, Z. Anal. Anwendungen \textbf{9} (1990), no. 2, 165--176. 

\bibitem{GMM} M. Ghaemi, V. Manuilov, and M. S. Moslehian, \textit{Left multipliers of reproducing kernel Hilbert $C^*$-modules and the Papadakis theorem}, arXiv:2104.09552.

\bibitem{JOR} P. E. T. Jorgensen, \textit{Unbounded Hermitian operators and relative reproducing kernel Hilbert space}, Cent. Eur. J. Math. \textbf{8} (2010), no. 3, 569--596. 

\bibitem{KUM} R. Kumari, J. Sarkar, S. Sarkar, and D. Timotin, \textit{Factorizations of kernels and reproducing kernel Hilbert spaces}, Integral Equations Operator Theory \textbf{87} (2017), no. 2, 225--244.

\bibitem{LAN} E. C. Lance, \textit{Hilbert $C^*$-Modules}, London Math. Soc. Lecture Note Series, vol. 210, Cambridge Univ. Press, 1995.

\bibitem{MAG} B. Magajna, \textit{Hilbert $C^*$-modules in which all closed submodules are complemented}, Proc. Amer. Math. Soc. \textbf{125} (1997), no. 3, 849--852.

\bibitem{MMX} V. Manuilov, M. S. Moslehian, and Q. Xu, \textit{Douglas factorization theorem revisited}, Proc. Amer. Math. Soc. \textbf{148} (2020), no. 3, 1139--1151. 

\bibitem{MT} V. M. Manuilov and E. V. Troitsky, \textit{Hilbert $C^*$-modules}, Translated from the 2001 Russian original by the authors. Trans. Math. Monog. 226. AMS, Providence, RI, 2005.

\bibitem{MOS1} M. S. Moslehian, \textit{Conditionally positive definite kernels in Hilbert $C^*$-modules}, Positivity \textbf{21} (2017), no. 3, 1161--1172

\bibitem{MB} M. S. Moslehian and M. Bakherad, \textit{Chebyshev type inequalities for Hilbert space operators}, J. Math. Anal. Appl. \textbf{420} (2014), no. 1, 737--749. 
. 

\bibitem{MOU} Z. Mousavi, R. Eskandari, M. S. Moslehian, and F. Mirzapour, \textit{Operator equations $AX+YB=C$ and $AXA^*+BYB^*=C$ in Hilbert $C^*$-modules}, Linear Algebra Appl. \textbf{517} (2017), 85--98.

\bibitem{MUR} G. J. Murphy, \textit{Positive definite kernels and Hilbert $C^*$-modules}, Proc. Edinburgh Math. Soc. (2) \textbf{40} (1997), no. 2, 367--374.

\bibitem{Pa} \textsc{W. L. Paschke}, \textit{Inner product modules over $B^*$-algebras}, Trans. Amer. Math. Soc. \textbf{182} (1972), 443--468.

\bibitem{An} V. I. Paulsen and M. Raghupathi, \textit{An Introduction to the Theory of Reproducing Kernel Hilbert Spaces.} Cambridge Studies in Advanced Mathematics. Cambridge University Press, Cambridge (2016)

\bibitem{SCH} I. J. Schoenberg, \textit{Metric spaces and positive definite functions}, Trans. Amer. Math. Soc. \textbf{44} (1938), no. 3, 522--536.

\bibitem{SZ2} F. H. Szafraniec, \textit{Multipliers in the reproducing kernel Hilbert space, subnormality and noncommutative complex analysis}, Reproducing kernel spaces and applications, 313--331, Oper. Theory Adv. Appl., 143, Birkh\"{a}user, Basel, 2003.

\bibitem{SZ} F. H. Szafraniec, \textit{Murphy's Positive definite kernels and Hilbert $C^*$-modules reorganized}, Noncommutative harmonic analysis with applications to probability II, 275--295, Banach Center Publ., 89, Polish Acad. Sci. Inst. Math., Warsaw, 2010.


\bibitem{TAK} M. Takesaki, \textit{Theory of Operator Algebras. I}, Reprint of the first (1979) edition. Encyclopaedia of Mathematical Sciences. 124. Operator Algebras and Non-commutative Geometry, 5. Springer-Verlag, Berlin, 2002.

\end{thebibliography}
\end{document}